\newtheorem{theorem}{Theorem}[section]
\newtheorem{lemma}[theorem]{Lemma}
\newtheorem{coro}[theorem]{Corollary}
\newtheorem{defi}[theorem]{Definition}
\newtheorem{prop}[theorem]{Proposition}
\newtheorem{remark}[theorem]{Remark}
\numberwithin{equation}{section}
\newcommand{\vertiii}[1]{{\left\vert\kern-0.25ex\left\vert\kern-0.25ex\left\vert #1 
    \right\vert\kern-0.25ex\right\vert\kern-0.25ex\right\vert}}
\newcommand{\PP}{\mathbb{P}}
\newcommand{\E}{\mathbb{E}}
\newcommand{\QQ}{\mathbb{Q}}
\newcommand{\RD}{\mathbb{R}^{d}}
\newcommand{\RR}{\mathbb{R}}
\newcommand{\R}{\mathbb{R}}
\newcommand{\EE}{\mathbb{E}\,}
\definecolor{darkgreen}{rgb}{0.0, 0.5, 0.0}
\title{A statistical approach for simulating the density solution of a McKean-Vlasov equation}
\author[1]{Marc Hoffmann}
\author[2]{Yating Liu}
\affil[1]{ CEREMADE, CNRS, Universit\'e Paris-Dauphine, PSL,  and Institut Universitaire de France,  75016 Paris, France}
\affil[2]{ CEREMADE, CNRS, Universit\'e Paris-Dauphine, PSL,  75016 Paris, France}
    \renewcommand\AB@affilsepx{: \protect\Affilfont}
    \renewcommand\AB@affilsepx{, \protect\Affilfont}
    \affil[1]{hoffmann@ceremade.dauphine.fr}
    \affil[2]{liu@ceremade.dauphine.fr}}
\newcommand{\Cmodel}{\mathfrak{M}}
\newtheorem{assump}{Assumption}
\begin{document}
\maketitle
\begin{abstract}
We prove optimal convergence results of a stochastic particle method for computing the classical solution of a multivariate McKean-Vlasov equation, when the measure variable is in the drift, following the classical approach of \cite{bossy1997stochastic, MR1910635}. Our method builds upon adaptive nonparametric results in statistics that enable us to obtain a data-driven selection of the smoothing parameter in a kernel type estimator. In particular, we generalise the Bernstein inequality of \cite{della2021nonparametric} for mean-field McKean-Vlasov models to interacting particles Euler schemes and obtain sharp deviation inequalities for the estimated classical solution. We complete our theoretical results with a systematic numerical study, and gather empirical evidence of the benefit of using high-order kernels and data-driven smoothing parameters.  
\end{abstract}

\section{Introduction}

\subsection{Motivation} \label{sec: motivation}

Let $\mathcal P_1$ denote the set of probability distributions on $\R^d$, $d \geq 1$, with at least one moment. Given a time horizon $T>0$, an initial condition $\mu_0 \in \mathcal P_1$ and two functions $$b:[0,T]\times \R^d \times \mathcal P_1 \rightarrow \R^d\;\;\text{and}\;\;\sigma:[0,T] \times \R^d \rightarrow \R^d \otimes \R^d,$$
a probability solution $\mu = (\mu_t(dx))_{t \in [0,T]}$ of the nonlinear Fokker-Planck equation
\begin{align}\label{original}
\begin{cases}
\partial_t \mu +\mathrm{div}\big(b(\cdot,\cdot,\mu)\mu\big) = \tfrac{1}{2}\sum_{k,k'=1}^d \partial_{kk'}^2\big(\sigma \sigma^\top \mu\big)\\
\mu_{t = 0}= \mu_0
\end{cases}
\end{align}
exists, under suitable regularity assumptions on the data $(\mu_0,b,\sigma)$, see in particular Assumptions \ref{ass: AI}, \ref{ass: AII} and (\ref{ass: AIII 1} or \ref{ass: AIII 2}) in Section \ref{sec: assumptions} and \ref{sec: nonlinear} below. Moreover, for each $t>0$, $\mu_t(dx)$ has a smooth density, also denoted by $\mu_t(x)$, which is a  classical solution to \eqref{original}. There is a vast literature on approximating \eqref{original}, we mention \cite{dos2022simulation, kumar2022well, leobacher2022well, reisinger2022adaptive, szpruch2021antithetic} to name but a few. The objective of the paper is to construct a probabilistic numerical method to compute $\mu_T(x)$ for every $T>0$ and $x \in \R^d$, with accurate statistical guarantees, that improves on previous works on the topic using an interacting particle system simulated from an Euler scheme. The main novelty is the introduction of an adaptive nonparametric statistical methodology, that enables us to obtain confidence intervals for the classical solution of  \eqref{original}, and this departs from what is usually sought when controlling the classical weak or strong error in an Euler scheme.\\

Model \eqref{original} includes in particular nonlinear transport terms of the form $b(t,x,\mu) = F\big(f(x)+\int_{\R^d}g(x-y)\mu(dy)\big)$, for smooth functions $F,f,g:\R^d \rightarrow \R^d$ that account for evolution systems with common force $f$ and mean-field interaction $g$ under some nonlinear transformation $F$ together with diffusion $\sigma$. Such models and their generalizations have inspired a myriad of application domains over the last decades, ranging from physics \cite{martzel2001mean, bossy2021instantaneous} to neurosciences \cite{MR2974499, bossy2015clarification}, structured models in population dynamics (like in {\it e.g.} swarming and flocking models \cite{MR2974499, bossy2015clarification, mogilner99, BURGER}), together with finance and mean-field games \cite{lasry2018mean, cardaliaguet2018mean, MR3752669}, social sciences and opinion dynamics \cite{CHAZELLE}, to cite but a few.\\ 

The classical approach of Bossy and Talay \cite{bossy1997stochastic}, subsequently improved by Antonelli and Kohatsu-Higa \cite{MR1910635} paved the way: the probability $\mu_t$ has a natural interpretation as the distribution $\mathcal L(X_t)$ of an $\R^d$-valued random variable $X_t$, where the stochastic process  $(X_{t})_{t\in[0, T]}$ solves  a McKean-Vlasov equation 
\begin{align} \label{mckeaneq}
\begin{cases}
dX_{t}=b(t, X_{t}, \mathcal L(X_t))dt + \sigma(t, X_{t})dB_{t}\\
\mathcal L(X_0) = \mu_0,
\end{cases}
\end{align}
for some standard $d$-dimensional Brownian motion $B = (B_t)_{t \in [0,T]}$. The mean-field interpretation of \eqref{mckeaneq} is the limit as $N \rightarrow \infty$ of an interacting particle system
\begin{align*}
\begin{cases}
dX_{t}^n=b(t, X_{t}^n, N^{-1}\sum_{m = 1}^N\delta_{X_t^m})dt + \sigma(t, X_{t}^n)dB_{t}^n,\;\;1 \leq n \leq N,\\
\mathcal L(X_0^1,\ldots, X_0^N) = \mu_0^{\otimes N},
\end{cases}
\end{align*}
where the $(B_t^n)_{t \in [0,T]}$ are independent Brownian motions, and $\delta_a$ denotes the Dirac measure at a point $a\in\RD$.  Indeed, under fairly general assumptions, see {\it e.g.} \cite{mckean1967propagation, sznitman1991topics, meleard1996asymptotic} we have $N^{-1}\sum_{n = 1}^N\delta_{X_t^n} \rightarrow \mu_t$ weakly as $N \rightarrow \infty$. One then simulates an approximation of 
\begin{equation} \label{eq: interacting system}
(X_t^1,\ldots, X_t^N)_{t \in [0,T]}
\end{equation}
by a stochastic system of particles 
\begin{equation} \label{eq: synthesised}
(\bar X_t^{1,h},\ldots, \bar X_t^{N,h})_{t \in [0,T]}
\end{equation}
using an Euler scheme with mesh $h$, see \eqref{discresys} below for a rigorous definition. The synthesised data \eqref{eq: synthesised} are then considered as a proxy of the system \eqref{eq: interacting system} and a simulation of $\mu_T(x)$ is obtained via the following nonparametric kernel estimator  
\begin{equation} \label{eq: def kernel estimator}
\widehat \mu_T^{N,h,\eta}(x) := N^{-1}\sum_{n = 1}^N \eta^{-d}K\big(\eta^{-1}(x-\bar X_T^{n,h})\big).
\end{equation}
Here $K:\R^d \rightarrow \R$ is kernel function that satisfies $\int_{\R^d}K(x)dx=1$ together with additional moment and localisation conditions, see Definition \ref{def: kernel} below, and $\eta$ is the statistical smoothing parameter.   Note that the computational cost of \eqref{eq: def kernel estimator} with a tensor–product kernel and coordinate-wise bandwidths  (see also Definition \ref{def: kernel} and Remark \ref{rk: construction kernels}) at one point $x$ is of order $O(Nd)$, where $N$ is the number of particles and $d$ the dimension, which is the same as in standard kernel density estimation with i.i.d. samples. \\

When $K$ is a Gaussian kernel, 
\cite{bossy1997stochastic} and \cite{MR1910635}  obtained  some convergence rates that depend on $N, \eta, h$ and the data $(\mu_0,b,\sigma)$. See also the comprehensive review of Bossy \cite{bossy2005some}.
 Our objective is somehow to simplify and improve on these results, up to some limitations of course, by relying on adaptive statistical methods. In order to control the strong error $\EE[|\widehat \mu_T^{N,h,\eta}(x)-\mu_T(x)|^2]$, a crucial step is to select the optimal bandwidth $\eta$ that exactly balances the bias and the variance of the statistical error and that depends on the smoothness of the map $x \mapsto \mu_T(x)$. To do so, we rely on recent data-driven selection procedures for $\eta$ based on the Goldenshluger-Lepski's method in statistics \cite{GL08, GL11, GL14}. They do not need any prior knowledge of $x \mapsto \mu_T(x)$, and this is a major advantage since the exact smoothness of the solution map
$$(\mu_0, b,\sigma) \mapsto (x \mapsto \mu_T(x))$$
may be difficult to obtain, even when $\mu_0$, $b$, $\sigma$ are given analytically. At an abstract level, if we assume that we can observe $(X_t^1,\ldots, X_t^N)_{t \in [0,T]}$ exactly, a statistical theory has been recently proposed to recover $\mu_T(x)$ in \cite{della2021nonparametric}, a key reference for the paper. However, the fact that we need to simulate first a proxy of the idealised data via a system of interacting Euler schemes needs a significant adjustment in order to obtain precise error guarantees. For other recent and alternate approaches  based on  projection-based particle method and the particle regression Monte Carlo
method,  we refer to \cite{belomestny2018projected, belomestny2020optimal} and the references therein.

\subsection{Results and organisation of the paper}

In Section \ref{sec: assumptions} we give the precise conditions on the data $(\mu_0, \sigma, b)$. We need a smooth and  sub-Gaussian integrable initial condition $\mu_0$ in order to guarantee the existence of a classical solution $\mu_t(x)$ to the Fokker-Planck equation \eqref{original} and sub-Gaussianity of the process $(X_t)_{t \in [0,T]}$ that solves the McKean-Vlasov equation \eqref{mckeaneq}. The coefficients $b(t,x,\mu_t),\,\sigma(t,x)$ are smooth functions of $(t,x)$ and $\sigma$ is  uniformly elliptic, following the conditions of Gobet and Labart \cite{Gobet2008sharp}, where the expression “small time” in the title of \cite{Gobet2008sharp} refers to a small time neighborhood rather than to a small value of $T$. \\

In Section \ref{sec: esti} we construct the estimator $\widehat \mu_T^{N,h,\eta}(x)$ of \eqref{eq: def kernel estimator} via a regular kernel $K$ of order $\ell$, see Definition \ref{def: kernel}. It depends on the size $N$ of the particle system, the step $h>0$  of the Euler scheme and the statistical smoothing parameter $\eta$ and the order of the kernel $\ell$. Abbreviating $K_\eta = \eta^{-d}K(\eta^{-1}\cdot)$ for $\eta >0$, we have the following decomposition for the simulation error:
\begin{align*}
\widehat \mu_T^{N,h,\eta}(x)-\mu_T(x) & = \int_{\R^d} K_\eta(x-\cdot) d(\bar \mu^{N,h}_T-\bar \mu_T^h)  + K_\eta \star (\bar \mu_T^h- \mu_T)(x)
+ K_\eta \star \mu_T(x) - \mu_T(x),
\end{align*}
where $\star$ denotes convolution, $\bar \mu^{N,h}_T = N^{-1}\sum_{n = 1}^N\delta_{\bar X_T^{n,h}}$ and $\bar \mu_T^h$ is the probability distribution of the continuous Euler scheme at time $t=T$, constructed  in \eqref{eq: euler abstract} below. In turn, the study of the error splits into three terms, with only the first one being stochastic.  We prove in Proposition \ref{prop: bernstein} a Bernstein concentration inequality for the fluctuation of $\bar \mu^{N,h}_T(dy)-\bar \mu_T^h(dy)$. In contrast to \cite{malrieu2006concentration}, it has the advantage to encompass test functions that may behave badly in Lipschitz norm, while they are stable in $L^1$ like a kernel $K_\eta$ for small values of $\eta$. It reads, for an arbitrary bounded test function $\varphi$:
$$\PP\Big( \int_{\R^d}\varphi \,d(\bar \mu_T^{N,h}-\bar \mu_T^h) \geq \varepsilon \Big) \leq \kappa_1 \exp\Big(-\frac{\kappa_2N\varepsilon^2}{|\varphi|^2_{L^2(\mu_T)}+|\varphi|_\infty\varepsilon}\Big),\;\;\text{for every}\;\;\varepsilon \geq 0,$$
with explicitly computable constants $\kappa_1$ and $\kappa_2$ that only depend on $T$ and the data $(\mu_0,  b, \sigma)$.
The proof is based on a change of probability argument via Girsanov's theorem. It builds on ideas developed in \cite{lacker2018on} and \cite{della2021nonparametric}, but it requires nontrivial improvements in the case of approximating schemes, in particular fine estimates in Wasserstein distance between the distributions $\bar \mu_T^h$ and $\mu_T$, thanks to Liu \cite{liu2019optimal}. When the measure argument in the drift is nonlinear, we impose some smoothness in terms of linear differentiability, see Assumption \ref{ass: AIII 2}. One limitation of this approach is that we can only encompass a measure term in the drift but not in the diffusion coefficient. The second approximation term $K_\eta \star (\bar \mu_T^h(x)-\mu_T(x)\big)$  is managed via the uniform estimates of Gobet and Labart  \cite{Gobet2008sharp} of $\bar \mu_T^h(x)-\mu_T(x)$ so that we can ignore the effect of $K_\eta$ as $\eta \rightarrow 0$ by the $L^1$-stability $|K_\eta|_{L^1} = |K|_{L^1}$. The final bias term  $K_\eta \star \mu_T(x) - \mu_T(x)$ is controlled by standard kernel approximation.\\ 

In Theorem \ref{thm: deviation} we give our main result, namely a deviation probability for the error $\widehat \mu_T^{N,h,\eta}(x)-\mu_T(x)$, provided $h$ and $\eta$ are small enough. In Theorem \ref{thm: error 1}, we give a quantitative bound for the expected error of order $p \geq 1$ that is further optimised in Corollary \ref{cor: error} when $x \mapsto \mu_T(x)$ is $k$-times continuously differentiable. We obtain a (normalised) error of order $N^{-k/(2k+d)}+h$, which combines the optimal estimation error in nonparametric statistics with the optimal strong error of the Euler scheme. Finally, we construct in Theorem \ref{thm: oracle} a variant of the Goldenshluger-Lepski's method that automatically selects an optimal data-driven smoothing parameter $\eta$ without requiring any prior knowledge on the smoothness of the solution. As explained in Section \ref{sec: motivation}, our approach suggests a natural way to take advantage of adaptive statistical methods in numerical simulation, when even qualitative information about the object to simulate  are difficult to obtain analytically.\\   

We numerically implement our method on several examples in Section \ref{sec: simulations}. We show in particular that it seems beneficial to use high-order kernels ({\it i.e.} with many vanishing moments) rather than simply Gaussian ones or more generally even kernels that only have one vanishing moment. This is not a surprise from a statistical point of view, since $x \mapsto \mu_T(x)$ is usually very smooth. Also, the data-driven choice of the bandwidth seems useful even in situations where our assumptions seem to fail (like for the Burgers equation, see in particular Section \ref{sec: burgers}). The proofs are delayed until Section \ref{sec: proofs}.

\section{Main results}

\subsection{Notation and assumptions}  \label{sec: assumptions}

For $x \in \R^d$, $|x|^2 = x \cdot x^\top$ denotes the Euclidean norm. We endow $\mathcal P_1$  with the $1$-Wasserstein metric
$$\mathcal W_1(\mu,\nu) = \inf_{\rho \in \Gamma(\mu,\nu)}\int_{\R^d \times \R^d} |x-y|\rho(dx,dy) = \sup_{|\varphi|_{\mathrm{Lip} \leq 1}}\int_{\R^d}\varphi \, d(\mu-\nu),$$
where $\Gamma(\mu,\nu)$ denotes the set of probability measures on $\R^d \times \R^d$ with marginals $\mu$ and $\nu$, and $|\varphi|_{\mathrm{Lip}} = \sup_{x \neq y}\frac{|\varphi(y)-\varphi(x)|}{|y-x|}$ is the Lipschitz semi-norm of $\varphi$.\\

All $\R^d$-valued functions $f$ defined on $[0,T]$, $\R^d$, $\mathcal P_1$ (or product of those) are implicitly measurable for the Borel $\sigma$-field induced by the (product) topology and written componentwise as $f = (f^1,\ldots, f^d)$, where the $f^i$ are real-valued. Following \cite{Gobet2008sharp}, we say that a function $f:[0,T] \times \R^d \rightarrow \R^d$ is $\mathcal C^{k,l}_b$ for integers $k,l\geq 0$ if the real-valued functions  $f^i$ in the representation $f = (f^1,\ldots, f^d)$ are  all bounded  and continuously differentiable with uniformly bounded derivatives w.r.t. $t$ and $x$ up to order $k$ and $l$ respectively. We write $\mathcal C^k_b$ (or $\mathcal C_b^l$) if $f$ is $\mathcal C^{k,l}_b$ and only depends on the time (or space) variable.\\

 For $\varphi : \R^d \rightarrow \R$, a $\sigma$-finite measure $\nu$ on $\R^d$ and $1 \leq p < \infty$, we set 
$$|\varphi|_{L^p(\nu)} = \Big(\int_{\R^d}|\varphi(x)|^p\nu(dx)\Big)^{1/p},\;\;|\varphi|_{L^p} = \Big(\int_{\R^d}|\varphi(x)|^p dx\Big)^{1/p},\;\;|\varphi|_\infty = \sup_{x \in \R^d}|\varphi(x)|.$$

\begin{assump}\label{ass: AI}
The law $\mathcal L(X_0) = \mu_0(dx)$ in \eqref{mckeaneq} or equivalently in \eqref{original} is sub-Gaussian, that is,  $\int_{\R^d}\exp(\gamma|x|^2)\,\mu_0(dx) < \infty$ for some $\gamma >0$. 
\end{assump}

The strong integrability assumption of $\mu_0$ (together with the subsequent smoothness properties of $b$ and $\sigma$) will guarantee that $\mu_t$ is sub-Gaussian for every $t>0$. This is the gateway to our change of probability argument in the Bernstein inequality of the Euler scheme in Proposition \ref{prop: bernstein} below.

\begin{assump}\label{ass: AII}
The diffusion matrix $\sigma: [0,T] \times \R^d \rightarrow \R^d \otimes \R^d$ is uniformly elliptic, $\mathcal C^{1,3}_b$ and $\partial_t \sigma$ is $\mathcal C_b^{0,1}$.
\end{assump}

By uniform ellipticity, we mean $\inf_{t,x} \xi^\top c(t,x) \xi \geq C_{\sigma} |\xi|^2$ for every $\xi \in \R^d$ and some $C_{\sigma} >0$, where $c = \sigma \sigma^\top$.

As for the drift  $b:[0,T]\times \R^d\times \mathcal P_1 \rightarrow \R^d$, we separate the case whether the dependence in the measure argument is linear or not. A nonlinear dependence is technically more involved and this case is postponed to Section \ref{sec: nonlinear}.

\begin{assump}(Linear representation of the drift.)
\label{ass: AIII 1}
We have
$$b(t,x,\mu) = \int_{\R^d} \widetilde b(t,x,y)\mu(dy),$$
with $(x,y) \mapsto \widetilde b(t,x,y)$ Lipschitz continuous (uniformly in $t$), $x\mapsto \widetilde{b}(t, x, y)\in \mathcal{C}_b^3$ (uniformly in $t$ and in $y$), $y\mapsto \widetilde{b}(t, x, y)\in \mathcal{C}_b^2$ (uniformly in $t$ and in $x$), $(t,x,y) \mapsto \partial_t \widetilde b(t,x,y)$ continuous and bounded and $\sup_{t \in [0,T]}|\widetilde b(t,0,0)|<\infty$. 
\end{assump}

Assumption \ref{ass: AIII 1} implies that the function $(t,x,\mu)\mapsto b(t,x,\mu)$ is Lipschitz continuous,  that is, there exists a constant $|b|_{\text{Lip}}>0$  such that  for every $(t,x,\mu), (s, y, \nu)\in[0,T]\times \RD\times \mathcal{P}_1,$ 
\begin{equation}\label{inq:lipb}
|b(t,x,\mu)-b(s, y, \nu)|\leq |b|_{\text{Lip}}\big[|t-s|+|x-y|+\mathcal{W}_1(\mu, \nu)\big],
\end{equation}
where the conclusion follows from the Kantorovich duality for the Wasserstein distance (see, e.g., \cite[Corollary 5.4]{MR3752669}).  
As detailed in the proof of Proposition \ref{densityeuler} below, Assumption \ref{ass: AIII 1} also implies that $(t,x) \mapsto b(t,x,\mu_t)$ is $\mathcal C^{1,3}_b$. This together with Assumption \ref{ass: AII} on the diffusion coefficient enables us to obtain a sharp approximation of the density of the Euler scheme of \eqref{mckeaneq}, a key intermediate result.  This is possible thanks to the result of Gobet and Labart \cite{Gobet2008sharp}, which, to our knowledge, is the best result in that direction. See also \cite{Bally1996The, MR1931967,Guyon2006Euler} for similar results under stronger assumptions.

Throughout the paper, we use the same symbol $\mu_t$ to denote both the probability distribution and its density (the same convention applies to $\bar\mu^h_t$, see Definition \ref{def:abstract-euler-scheme}). Specifically, $\mu_t(dx)$ denotes the distribution, while $\mu_t(x)$ denotes its density with respect to the Lebesgue measure. This slight abuse of notation will be maintained throughout.  Regarding constants, we denote by $\Cmodel$ generic constants that may vary from line to line and depend only on the model parameters $\mu_0, b, \sigma, T,d$. When the precise dependence is needed, we make it explicit in order to emphasize the links between results.  

\subsection{Construction of an estimator of $\mu_T(x)$} \label{sec: esti}

We pick an integer $M\in\mathbb{N}\setminus \{0\}$ for time discretisation and $h\coloneqq\frac{T}{M}$ as time step. We set $t_m\coloneqq m\cdot h$, $m\in \{0, ..., M\}$.\\

For $1\leq n \leq N$, let $B^n=(B_t^{n})_{t\in[0, T]}$ be $N$ independent $d$-dimensional Brownian motions. We construct $N$ interacting processes $(\bar X_t^{1,h},\ldots, \bar X_t^{N,h})_{t \in [0,T]}$ via the following Euler schemes   
\begin{equation}\label{discresys}
\begin{cases}
\:\bar{X}_{t_{m+1}}^{n, h}=\bar{X}_{t_{m}}^{n, h} + h\cdot b(t_m, \bar{X}_{t_{m}}^{n, h}, \bar{\mu}_{t_{m}}^{N,h}) + \sqrt{h}\cdot \sigma (t_m, \bar{X}_{t_{m}}^{n, h}) Z_{m+1}^{n},\quad 1\leq n\leq N, \\ 
\;Z_{m+1}^{n}\coloneqq \frac{1}{\sqrt{h}}\big( B_{t_{m+1}}^n- B_{t_{m}}^n\big),\;\bar{\mu}_{t_{m}}^{N,h}\coloneqq \frac{1}{N}\sum_{n=1}^{N}\delta_{\bar{X}_{t_{m}}^{n, h}},\\ 
\mathcal L\big(\bar{X}_0^{1, h}, \ldots , \bar{X}_0^{h, N}\big) = \mu_0^{\otimes N},
\end{cases}
\end{equation}
with $\big(\bar{X}_0^{1, h}, \ldots , \bar{X}_0^{h, N}\big)$ independent of $(B^n)_{1 \leq n \leq N}$.  They are appended with their continuous version: for every $0\leq m\leq M-1$, $t\in[t_m, t_{m+1})$ and $1\leq n\leq N$:
\begin{align*}\label{contsys}
 \bar{X}_{t}^{n, h}  =\bar{X}_{t_{m}}^{n, h} + (t-t_m) b(t_m, \bar{X}_{t_{m}}^{n, h}, \bar{\mu}_{t_{m}}^{N, h}) + \sigma (t_m, \bar{X}_{t_{m}}^{n, h}) (B^{n}_t-B^{n}_{t_m}).
\end{align*}

\begin{defi} \label{def: kernel}
Let $\ell \geq 0$ be an integer. A $\ell$-regular kernel is a 
bounded function $K:\R^d \rightarrow \R$ such that  for $k=0,\ldots, \ell$,  $\int_{\R^d}|x|^k|K(x)|dx<+\infty$ and 
\begin{equation} \label{eq: cancellation}
\int_{\R^d}x_{[1]}^k\,K(x)dx=\ldots = \int_{\R^d}x_{[d]}^k\,K(x)dx = {\bf 1}_{\{k = 0\}}\;\;\text{with}\;\;x=(x_{[1]},\ldots, x_{[d]}) \in \R^d.
\end{equation}
\end{defi}

\begin{remark} \label{rk: construction kernels}
A standard construction of $\ell$-order kernels is discussed in \cite{scott2015multivariate}, see also the classical textbook by Tsybakov \cite{tsybakov2009introduction}. In the numerical examples of the paper, we implement in dimension $d=1$ the Gaussian high-order kernels described in Wand and Schucany \cite{wand1990gaussian}. Given a $\ell$-regular kernel $\mathfrak h$ in dimension $1$, a multivariate extension is readily obtained by tensorisation: for $x=(x_{[1]},\ldots, x_{[d]}) \in \R^d$, set $K(x) = \mathfrak h^{\otimes d}(x) = \prod_{l = 1}^d \mathfrak h(x_{[l]})$, so that \eqref{eq: cancellation} is satisfied. 
\end{remark}

Finally, we pick a $\ell$-regular kernel $K$, a bandwidth $\eta >0$ and set
$$
\widehat \mu_T^{N,h,\eta}(x) := N^{-1} \sum_{n=1}^N \eta^{-d}K\big(\eta^{-1}(x-\bar X_T^{n,h})\big)
$$
for our estimator of $\mu_T(x)$. It is thus specified by the kernel $K$, the Euler scheme time step $h$, the number of particles $N$ and the statistical smoothing parameter $\eta$.

\subsection{Convergence results}

Under Assumptions \ref{ass: AI}, \ref{ass: AII} and (\ref{ass: AIII 1} (or later \ref{ass: AIII 2})), 
\eqref{original} 
admits a unique classical solution $(t,x) \mapsto \mu_t(x)$, see {\it e.g.} the classical textbook \cite{bogachev2022fokker}, Corollary 8.2.2. 
Our first result is a deviation inequality for the error $\widehat \mu_{T}^{N,h,\eta}(x)-\mu_{T}(x)$. We need some further notation.

\begin{defi}\label{def:abstract-euler-scheme}
The abstract Euler scheme relative to the McKean-Vlasov equation \eqref{mckeaneq} is defined as
\begin{align}\label{eq: euler abstract}
\begin{cases}
\bar{X}_{t_{m+1}}^h=\bar{X}_{t_{m}}^h+h \cdot b(t_m, \bar{X}_{t_{m}}^h, \mu_{t_{m}})+\sqrt{h} \cdot \sigma (t_m, \bar{X}_{t_{m}}^h) Z_{m+1},\\
Z_{m+1}=\frac{1}{\sqrt{h}}(B_{t_{m+1}}-B_{t_{m}}) \sim \mathcal{N}(0,1),\\
\mathcal L(\bar{X}_{t_{0}}^h)=\mu_0, 
\end{cases}
\end{align}
appended with its continuous version: for every $0\leq m\leq M-1$, $t\in[t_m, t_{m+1})$ and $1\leq n\leq N$:
\begin{align*}
\label{eq: abstract conteuler}
 \bar{X}_{t}^{h}  =\bar{X}_{t_{m}}^{h} + (t-t_m) b(t_m, \bar{X}_{t_{m}}^h, \mu_{t_{m}}) + \sigma (t_m, \bar{X}_{t_{m}}^h) (B_t-B_{t_m}).
\end{align*}
\end{defi}
We set $\bar \mu_t^h = \mathcal L(\bar X_t^h)$.
The term {\it abstract} for this version of the Euler scheme comes from its explicit dependence upon $\mu_{t_m}$, that prohibits its use in practical simulation. It however appears as a natural approximating quantity. 

\begin{defi}
For $\varepsilon >0$, the $\varepsilon$-accuracy of the abstract Euler scheme is
\begin{equation} \label{eq: def h opt}
h^\star(\varepsilon) = \sup\big\{h>0, \;\sup_{x \in \mathbb{R}^d}|\bar \mu_T^h(x)-\mu_T(x)| \leq \varepsilon\big\}, 
\end{equation}
with the convention $\sup \emptyset = \infty$. 
\end{defi}

Proposition \ref{densityeuler} below,  relying on the estimates of \cite{Gobet2008sharp}, implies that $\sup_{x \in \mathbb{R}^d}|\bar \mu_T^h(x)-\mu_T(x)|$  is of order $h$. Therefore 
$h^\star(\varepsilon)$  is well defined and positive for every $\varepsilon >0$.

We further abbreviate $K_\eta(x) = \eta^{-d}K(\eta^{-1}x)$ for $\eta >0$.
\begin{defi}\label{def:bias}
The bias (relative to a $\ell$-regular kernel $K$) of a function $\varphi:\R^d \rightarrow \R$ at $x$ and scale $\eta >0$ is 
$$\mathcal B_\eta(\varphi,x)=\sup_{0 < \eta'\leq \eta}\Big|\int_{\R^d}K_{\eta'}(x-y)\varphi(y)dy-\varphi(x)\Big|$$
and for $\varepsilon >0$, the $\varepsilon$-accuracy of the bias of $\varphi$ is
\begin{equation} \label{eq: def tau opt}
\eta^\star(\varepsilon,  \varphi) = \sup\big\{\eta>0, \mathcal B_\eta(\varphi, x) \leq \varepsilon\big\}.
\end{equation}
\end{defi}
Note that whenever $\varphi$ is continuous in a vicinity of $x$, we have $\mathcal B_\eta(\varphi,x) \rightarrow 0$ as $\eta \rightarrow 0$, and $\eta^\star(\varepsilon, \varphi)$  is well-defined and positive for every $\varepsilon >0$.\\

\begin{theorem} \label{thm: deviation}
Work under Assumptions \ref{ass: AI}, \ref{ass: AII} and \ref{ass: AIII 1}. Let $x \in \R^d$, $N\geq 2$, $\varepsilon >0$ and $\mathfrak C>0$. 
For any $0 <  h < \min\big(\,h^\star(\tfrac{1}{3}|K|_{L^1}^{-1}\varepsilon\,), \,\mathfrak CN^{-1}\big)$ and $0 < \eta < \eta^\star(\tfrac{1}{3}\varepsilon, \mu_T)$, we have:
\begin{equation} \label{eq: main deviation}
\PP\Big(\big|\widehat \mu^{N,h,\eta}_{T}(x)-\mu_{T}(x)\big| \geq \varepsilon \Big) \leq 2\kappa_1 \exp\Big(-\frac{  \kappa_2 N\varepsilon^2}{|K_\eta(x-\cdot)|^2_{L^2(\mu_{T})}+|K_\eta|_\infty\varepsilon}\Big),
\end{equation}
where $\kappa_1$ is a constant depending on $\mathfrak{C}$  and $\Cmodel$, and $\kappa_2$ depends only on $\Cmodel$.
\end{theorem}

Several remarks are in order:  {\bf 1)} Let $x \in \mathcal K \subset \R^d$, where $\mathcal K$ is an arbitrary compact. Then, 
$$|K_\eta(x-\cdot)|_{L^2(\mu_{T})}^2 \leq \sup_{(x,y) \text{ s.t. } x\,\in\,\mathcal{K}, \, x-y \in \mathrm{Supp}(K)}\mu_{T}(y)|K_\eta(x-\cdot)|_{L^2}^2 = C(\mu_{T},  \mathcal K , K)\eta^{-d}$$  and $|K_\eta|_\infty = \eta^{-d}|K|_\infty$,
therefore \eqref{eq: main deviation} rather reads
$$\PP\Big(\big|\widehat \mu^{N,h,\eta}_{T}(x)-\mu_{T}(x)\big| \geq \varepsilon \Big) \leq c_1 \exp\Big(-c_2 N\eta^{d}\frac{\varepsilon^2}{1+\varepsilon}\Big),$$
up to constants $c_1$ and $c_2$ that only depend on $\mathfrak C, \, \mathcal K,\,T$, the kernel $K$ and the model constant $\mathfrak{M}$.
{\bf 2)} The factor $\tfrac{1}{3}$ in the upper bound of $h$ and  $\eta$  can be replaced by an arbitrary constant in $(0,\frac{1}{2})$ by modifying the union bound argument \eqref{eq: union bound argument} in the proof. This modification only affects the multiplicative constant in the prefactor of the convergence rate in $N$ of the forthcoming Theorem~\ref{thm: error 1}. {\bf 3)} The estimate \eqref{eq: main deviation} gives an exponential bound of the form $\exp(-c N\eta^d\varepsilon^2)$ for the behaviour of the error $\widehat \mu^{N,h,\eta}_{ T}(x)-\mu_{ T}(x)$ for small $\varepsilon$, provided $h$ and $\eta$ are  sufficiently small. This is quite satisfactory in terms of statistical accuracy, for instance if one wants to implement confidence bands: for any risk level $\alpha$, with probability bigger than $1-\alpha$, the error $|\widehat \mu^{N,h,\eta}_{T}(x)-\mu_{T}(x)|$ is controlled by a constant times $\sqrt{N^{-1}\eta^{d}\log\tfrac{1}{\alpha}}$. {\bf 4)} We need $Nh$ bounded. This stems from the key estimate \eqref{eq: key estimate} in the proof of Proposition \ref{expp} 
where we need to control $N$ times the squared Wasserstein distance between the solution and the distribution of its Euler scheme which is of order $h$, in order to apply Novikov's criterion for an appropriate change of measure, see the proof for the details.  {\bf 5)} For the precise definitions of the constants $\kappa_1$ and $\kappa_2$ in \eqref{eq: main deviation}, we refer the reader to Proposition~\ref{prop: bernstein}. 
Note that although $\kappa_2$ depends only on $\Cmodel$, we retain the notation $\kappa_2$ for the subsequent discussion (see, e.g., the remarks following Theorem~\ref{thm: oracle}), which concerns the relation between the constants. \\

Theorem \ref{thm: deviation} does not give any quantitative information about the accuracy in terms of $h,N$ and $\eta$. Our next result gives an explicit upper bound for the expected error of order 
$p \geq 1$.

\begin{theorem} \label{thm: error 1}
Work under Assumptions \ref{ass: AI}, \ref{ass: AII} and \ref{ass: AIII 1}. Let $\mathfrak C >0$ and assume $Nh \leq \mathfrak C$. For every $p \geq 1$ and  $x \in \mathcal K \subset \R^d$, where $\mathcal K$ is an arbitrary compact, we have
\begin{equation} \label{eq: error lp}
\mathbb E\Big[\big|\widehat \mu^{N,h,\eta}_{T}(x)-\mu_{T}(x)\big|^p\Big] \leq \kappa_3 \big(\mathcal B_\eta(\mu_{T}, x)^p+(N\eta^d)^{-p/2}+h^p\big).
\end{equation}
where $\mathcal B_\eta$ is defined in Definition \ref{def:bias}, and $\kappa_3 >0$ depends  on $\mathfrak C$, $p$,  $\mathcal K$,  $|K|_{L^2}$, $|K|_{\infty}$ and  the model constant $\Cmodel$.  
\end{theorem}
 
If $z \mapsto \mu_{T}(z)$ is $k$-times continuously differentiable in a vicinity of $x$, then, for a regular kernel of order $(k-1)_+$, we have $\mathcal B_{\eta}(\mu_{T}, x) \lesssim \eta^k$, see for instance the proof of Corollary \ref{cor: error}. This enables one to optimise the choice of the bandwidth $\eta$:
 
\begin{coro} \label{cor: error}
Assume that $x \mapsto \mu_{T}(x)$ is $\mathcal C^k_b$ for some $k  \geq 1$ and that $K$ is $\ell$-regular, with $\ell \geq 0$. In the setting of Theorem \ref{thm: error 1}, the optimisation of the bandwidth choice $\eta = N^{-1/(2\min(k, \ell+1)+d)}$ yields the explicit error rate
\begin{equation} \label{eq: error lp opti}
\mathbb E\Big[\big|\widehat \mu^{N,h,\eta}_{T}(x)-\mu_{T}(x)\big|^p\Big] \leq \kappa_4 \big(N^{-\min(k, \ell+1)p/(2\min(k, \ell+1)+d)}+h^p\big),
\end{equation}
for some $\kappa_4>0$ depending on $\mathfrak C$, $p, \ell$,  $\mathcal K$, $ |K|_{L^2}$, $|K|_{\infty}$ and  the model constant $\Cmodel$. 
\end{coro}

Some remarks: {\bf 1)} The dependence of $\kappa_3$ and $\kappa_4$ on $(\mu_0,b,\sigma)$  can be computed explicitly  via the bounds that appear in Assumptions \ref{ass: AI}, \ref{ass: AII} and \ref{ass: AIII 1}. {\bf 2)} Corollary \ref{cor: error} improves the result of \cite[Theorem 3.1]{MR1910635} for the strong error of the classical solution $\mu_{T}(x)$ in the following way: in \cite{MR1910635}, the authors restrict themselves to the one-dimensional case $d=1$ with $p=1$ in the loss and take a Gaussian kernel for the density estimation, with bandwidth $h=\frac{T}{M}$ as in the time discretisation. They obtain the rate (in dimension $d=1$)
$$N^{-1/2}h^{-1/2-\epsilon}+h,$$
for arbitrary $\epsilon >0$ (up to an inflation in the constant when $\epsilon$ vanishes), and this has to be compared in our case to the rate \eqref{eq: error lp opti} which gives
$$  N^{-\frac{k}{2k+1}}+h = N^{-\frac{1}{2}}N^{\frac{1}{2(2k+1)}}+h, $$ 
which yields possible improvement depending on the regime for $h$, namely whenever $N \ll h^{-(2k+1)}$ which is less demanding as $k$ increases.  The essence of the improvement compared to \cite{MR1910635} is that we make an optimal bias-variance analysis based on nonparametric statistics techniques. In particular, the introduction of kernels that oscillate more than the Gaussian kernel enables us to obtain an error bound linked to the  smoothness parameter $k$ that can be very large whenever the classical solution $x \mapsto \mu_t(x)$ is smooth.\\

One defect of the upper bound \eqref{eq: error lp opti} is that the optimal choice of $\eta$ depends on the analysis of the bias $\mathcal B_{\eta}(\mu_{T}, x)^p$, and more specifically on the smoothness $k$, and that quantity is usually unknown or difficult to compute. Indeed, the information that $x \mapsto \mu_{T}(x)$ is $k$-times continuously differentiable with the best possible $k$ is hardly tractable from the data  $(\mu_0, b, \sigma)$, essentially due to the nonlinearity of the Fokker-Planck equation \eqref{original}. Finding an optimal $\eta$ without prior knowledge on the bias is a long standing issue in nonparametric statistics. One way to circumvent this issue is to select $\eta$ depending on the stochastic particle system $(\bar X^{1,h}_{T},\ldots, \bar X^{N,h}_{T})$ itself, in order to optimise the trade-off between the bias and the variance term. While this is a customary approach in statistics, to the best of our knowledge, this is not the case in numerical simulations. It becomes particularly suitable in the case of nonlinear McKean-Vlasov type models.\\

We adapt a variant of the classical Goldenshluger-Lepski's method \cite{GL08, GL11, GL14}, and refer the reader to classical textbooks such as \cite{gine2021mathematical}.  See also the  illuminating  paper by \cite{lacour2017estimator} that  gives  a good insight about ideas around bandwidth comparison and data driven selection. For simplicity, we focus on controlling the error for $p=2$. Fix $x\in \R^d$. Pick a finite set $\mathcal H \subset [N^{-1/d}(\log N)^{2/d}, 1]$, and define
$$\mathsf{A}_\eta^N(T,x)= \max_{\eta' \leq \eta, \eta' \in \mathcal H}\Big\{\big( \,\widehat \mu^{N,h,\eta}_{T}(x)-\widehat \mu^{N,h,\eta'}_{T}(x)\big)^2-(\mathsf V_{\eta}^N+\mathsf V_{\eta'}^N)\Big\}_+,$$ 
where $\{x\}_+ = \max(x,0)$ and 
\begin{equation} \label{eq: def variance lepski}
\mathsf V_{\eta}^N = \varpi |K|_{L^2}^2(\log N)N^{-1}\eta^{-d},\;\;\varpi >0.
\end{equation}
Let
\begin{equation} \label{eq: def minimisation}
\widehat \eta^N(T,x) \in \mathrm{argmin}_{\eta \in \mathcal H}(\mathsf{A}^N_\eta(T,x)+\mathsf{V}_\eta^N).
\end{equation}
We then have a so-called {\it oracle inequality}, following the standard terminology in nonparametric statistics. It means that our error bound is comparable to the optimal trade off between bias and variance in our method, namely $\min_{\eta \in \mathcal H}\big(\mathcal B_\eta(\mu_{T}, x)^2+N^{-1}\eta^{-d}\big)$ which is unknown to us, since it explicitly depends on $\mu_{T}$ in a vicinity of $x$. We also have an additional term of order $h^2$ linked to the Euler scheme. More precisely, we have
\begin{theorem} \label{thm: oracle}
Work under Assumptions \ref{ass: AI}, \ref{ass: AII} and \ref{ass: AIII 1}. For every  $x \in \mathcal K \subset \R^d$, where $\mathcal K$ is an arbitrary compact, we have
\begin{equation} \label{eq: error lp}
\mathbb E\Big[\big(\widehat \mu^{N,h, \widehat \eta^N( T,x) }_{T}(x)-\mu_{T}(x)\big)^2\Big] \leq \kappa_5 \Big(\min_{\eta \in \mathcal H}\big(\mathcal B_\eta(\mu_{T}, x)^2+N^{-1}\eta^{-d}\log N\big)+h^2\Big),
\end{equation}
for some constant $\kappa_5 >0$ depending on  $\mathcal K$, $|K|_{\infty}$, $\mathrm{Supp}(K)$ and  the model constant $\Cmodel$, as soon as $\mathrm{Card}(\mathcal H) \leq N$,  $\varpi$ is large enough,   and $\eta^d \geq N^{-1}\log N \tfrac{3 \log 2 |K|_\infty^2}{2 |K|_{L^2}^2}$.
\end{theorem}

Some remarks: {\bf 1)} Up to an unavoidable $\log N$-factor, known as the Lepski-Low phenomenon (see \cite{LEPSKI, LOW}) in statistics, we thus have a data-driven smoothing parameter $\widehat \eta^N(T,x)$ that automatically achieves the optimal bias-variance balance, without affecting the effect of the Euler discretisation step $h$. {\bf 2)}  The detailed constraint on $\varpi$ is provided in the proof of Theorem~2.9.  A limitation of the method is the choice of the pre-factor $\varpi$ in the bandwidth selection procedure, which depends on upper bound of $\mu_{T}$ locally around $x$, and more worryingly, on the constant $\kappa_2^{-1}$ of Proposition \ref{prop: bernstein} below which is quite large. In practice, and this is universal to all smoothing statistical methods, we have to adjust a specific numerical protocol, see Section \ref{sec: simulations} below.\\

When we translate this result in terms of number of derivatives for $x \mapsto \mu_{T}(x)$, we obtain the following adaptive estimation result.

\begin{coro} \label{cor: oracle adaptive}
Assume that $x \mapsto \mu_{T}(x)$ is $\mathcal C^k_b$ for some $k \geq 1$ and that $K$ is $\ell$-regular. Specify the oracle estimator $\widehat \mu_{T}^{N,h, \widehat \eta^N(T,x) }(x)$ with 
$$\mathcal H = \{ (N/\log N)^{-1/(2m+d)}, m=1,\ldots, \ell+1\}.$$
In the setting of Theorem \ref{thm: oracle}, for every  $x \in \mathcal K \subset \R^d$, where $\mathcal K$ is an arbitrary compact, we have
$$
\mathbb E\Big[\big(\widehat \mu_T^{N,h, \widehat \eta^N(T,x) }(x)-\mu_{T}(x)\big)^2\Big] \leq \kappa_6 \Big(\Big(\frac{\log N}{N}\Big)^{\frac{2\min(k,\ell+1)}{2\min(k,\ell+1)+d}}+h^2\Big),
$$
for some $\kappa_6>0$ depending on $\mathcal{K}, \,|K|_{\infty}$, $\mathrm{Supp}(K)$,   $k$ and the model constant $\Cmodel$. 
\end{coro}

In practice, see in particular Section \ref{sec: simulations} below, if $x \mapsto \mu_{T}(x)$ is very smooth (and this is the case in particular if $x \mapsto b(t,x,\mu_t)$ is smooth and $\sigma$ constant) we are limited in the rate of convergence by the order $\ell$ of the kernel. This shows in particular that it is probably not advisable in such cases to pick a Gaussian kernel for which we have the restriction $\ell=1$.

\subsection{Nonlinearity of the drift in the measure argument} \label{sec: nonlinear}

In the  case  of a drift $(t,x,\mu) \mapsto b(t,x,\mu)$ with a nonlinear dependence in the measure argument $\mu$, the assumptions are a bit more involved. For a smooth real-valued test function $\varphi$ defined on $\R^d$, we set
 \begin{equation} \label{eq: generator}
 \mathcal A_t \varphi(x) = b(t,x,\mu_t)^\top\nabla \varphi(x)+\tfrac{1}{2}\sum_{k,k'=1}^d c_{kk'}(t,x)\partial_{kk'}^2 \varphi(x),
 \end{equation}
where $c_{kk'}(t,x)  = (\sigma \sigma^\top)_{kk'}(t,x)$, and that can also be interpreted  as the generator of the associated nonlinear Markov process of $(X_t)_{t \in [0,T]}$ defined in \eqref{mckeaneq}.  Following \cite{MR3752669}, we say that a mapping $f:\mathcal P_1 \rightarrow \R^d$ has a linear functional derivative if there exists $\delta_\mu f:\R^d\times \mathcal P_1 \rightarrow 
 \R^d$ such that
 $$f(\mu')-f(\mu) = \int_0^1 d\vartheta \int_{\R^d} \delta_\mu f(y, (1-\vartheta)\mu+\vartheta\mu')(\mu'-\mu)(dy),$$
with
 $|\delta_\mu f(y',\mu')-\delta_\mu f(y,\mu)| \leq C(|y'-y|+\mathcal W_1(\mu',\mu))$ and $|\partial_y(\delta_\mu f(y,\mu')-\delta_\mu f(y,\mu))| \leq C\mathcal W_1(\mu',\mu)$, for some $C \geq 0$. We can iterate the process via mappings $\delta_\mu^\ell f: (\R^d)^\ell \times \mathcal P_1 \rightarrow \R^d$ for $\ell = 1,\ldots, k$ defined recursively by $\delta_\mu^\ell f = \delta_\mu \circ \delta_\mu^{\ell-1} f$.
 
 \begin{assump}(Nonlinear representation of the drift.)
\label{ass: AIII 2}
For every $\mu \in \mathcal P_1$, $(t,x) \mapsto b(t,x,\mu)$ is $\mathcal C_b^{1,3}$. Moreover, for every $x$,
$(t,y) \mapsto \mathcal A_t \delta_\mu b(t,x,y,\mu_t)$ exists and is continuous and bounded.\\ 

Finally, $\mu \mapsto b(t,x,\mu)$ admits a $k$-linear functional derivative (with $k \geq d$ for $d=1,2$ and $k \geq d/2$ for $d \geq 3$) that admits the following representation
\begin{equation} \label{eq: smoothness nonlinear b}
\delta_\mu^k b(t,x,(y_1,\ldots, y_k),\mu) = \sum_{\mathcal I, m} \bigotimes_{j \in \mathcal I}(\delta_\mu^k b)_{\mathcal I,j, m}(t,x,y_j,\mu),
\end{equation}
where the sum in $\mathcal I$ ranges over subsets of $\{1,\ldots, k\}$, the sum in $m$ is finite and the mappings $(x,y) \mapsto (\delta_\mu^k b)_{\mathcal I,j, m}(t,x,y_j,\mu)$ are Lipschitz continuous, uniformly in $t$ and $\mu$.
\end{assump}
Assumption \ref{ass: AIII 2} has two objectives: first comply with the property that $(t,x) \mapsto b(t,x,\mu_t)$ is $\mathcal C^{1,3}_b$ in order to apply the result of Gobet and Labart \cite{Gobet2008sharp} in Proposition \ref{densityeuler} below and second, provide a sufficiently smooth structure in \eqref{eq: smoothness nonlinear b} in order to implement the change of probability argument of Proposition \ref{prop: bernstein}. While \eqref{eq: smoothness nonlinear b} appears a bit {\it ad-hoc} and technical, it is sufficiently general to encompass drift of the form
$$b(t,x,\mu) = \sum_j F_j\big(t,x,\int_{\R^{q_2}} G_j\big(t,x,\int_{\R^d} H_j(t,x,z)\mu(dz),z'\big)\lambda_j(dz')\big)$$ for smooth mappings $F_j(t,x,\cdot): \R^{q_3}\rightarrow \R^d,G_j(t,x,\cdot) : \R^{q_1} \times \R^{q_2} \rightarrow \R^{q_3}, H_j(t,x,\cdot):\R^d \rightarrow \R^{q_1}$ and positive measures $\lambda_j$ on $\R^{q_2}$ in some cases and combinations of these, see Jourdain and Tse \cite{jourdain2020central}. Explicit examples where the structure of the drift is of the form \eqref{eq: smoothness nonlinear b} rather than the form in Assumption \ref{ass: AIII 1} are given for instance in \cite{coghi2018pathwise,  MR779460, meleard1996asymptotic, MJ98}.

\begin{theorem} \label{thm: nonlinear}
Work under Assumptions \ref{ass: AI}, \ref{ass: AII} and \ref{ass: AIII 2}. The results of Theorems \ref{thm: deviation}, \ref{thm: error 1}, \ref{thm: oracle} and Corollary \ref{cor: error}, \ref{cor: oracle adaptive} remain valid, up to a modification of the constants $\kappa_i$, $i=1,\ldots, 6$.
 \end{theorem}

\section{Numerical implementation} \label{sec: simulations}

We investigate our simulation method on three different examples. The simulation code is available via 
 \texttt{GitHub} (see: \href{https://bit.ly/3UrOmiw}{\texttt{bit.ly/3UrOmiw}}). 
\begin{enumerate}
\item 
{\it A linear interaction as in Assumption \ref{ass: AIII 1} of the form $\widetilde b(t,x,y) = c(x-y)$}. This case is central to several applications (see the references in the introduction for instance) and has moreover the advantage to yield an explicit solution for $\mu_t(x)$, enabling us to accurately estimate the error of the simulation. 

\item 
{\it A double layer potential with a possible singular shock in the common drift that ``stresses" Assumption \ref{ass: AIII 1}}. Although the solution is not explicit, the singularity enables us to investigate the automated bandwidth choice on repeated samples and sheds some light on the effect of our statistical adaptive method.  

\item  
{\it The Burgers equation in dimension $d=1$}. Although not formally within the reach Assumption \ref{ass: AIII 1}, we may still implement our method. While we cannot  provide  theoretical guarantees, we again have an explicit solution for $\mu_t(x)$ and we can accurately measure the performance of our simulation method.   

\end{enumerate}
For simplicity, all the numerical experiments are conducted in dimension $d=1$. The common findings of our numerical investigations can be summarised as follows: when the density solution $x \mapsto \mu_t(x)$ is smooth, high-order kernels (with vanishing moments beyond order $1$) outperform classical kernels such as the Epanechnikov kernel or a standard Gaussian kernel as shown in Cases 1 and 3. Specifically, we implement high-order Gaussian based kernels as developed for instance in Wand and Schucany \cite{wand1990gaussian}. Table 1 below gives explicit formulae for $K$ and $\ell = 1, 3, 5, 7, 9$ (Recall Remark \ref{rk: construction kernels} for the construction of $\ell$-order kernels  for $d>1$). Otherwise, for instance when the transport term has a common Lipschitz continuous component (but  not more regular than Lipschitz continuous), the automated bandwidth method tends to adapt the window to prevent over-smoothing, even with high-order kernels. Overall, we find in all three examples that implementing high-order kernels is beneficial for the quality of the simulation. 

\vspace{0.5cm}
\begin{table}[ht]
\begin{center}
\begin{tabular}{| c | l |} 
\hline
 order $\ell$ & kernel  $K(x)$  \\ 
 \hline
 1 &   $\phi(x)$ \\ 
$3$ &  $\tfrac{1}{2}(3-x^2)\phi(x)$  \\ 
 $5$ &  $\tfrac{1}{8}(15-10x^2+x^4)\phi(x)$  \\ 
 $7$ &  $\tfrac{1}{48}(105-105x^2+21x^4-x^6)\phi(x)$  \\ 
 $9$ & $\tfrac{1}{384}(945-1260x^2+378x^4-36x^6+x^8)\phi(x)$    \\  [2pt]
 \hline
\end{tabular}
\vspace{0.5cm}
\caption{\label{demo-table}Construction of kernels $K$ of order $\ell = 1, 3, 5, 7, 9$, with $\phi(x) = (2\pi)^{-1/2}\exp(-\tfrac{1}{2}x^2)$, dimension $d=1$, following \cite{wand1990gaussian}.}
\end{center}
\end{table}


\subsection{The case of a linear interaction}
We consider the simplest McKean-Vlasov SDE with linear interaction
$$dX_t = -\int_{\R}(X_t - x)\mu_t(dx)dt+dB_t,\;\;\mathcal L(X_0) = \mathcal N(3, \tfrac{1}{2})$$
where $(B_t)_{t \in [0,T]}$ is a standard Brownian motion and $\mathcal N(\rho,\sigma^2)$ denotes the Gaussian distribution with mean $\rho$ and variance $\sigma^2$. We do not fulfill Assumption \ref{ass: AIII 1} here since $\widetilde{b}(t,x,y)=x-y$ is not bounded. We nevertheless choose this model for simulation since it has an explicit solution $\mu_t$ as a stationary Ornstein-Uhlenbeck process, namely, abusing notation slightly, $\mu_t = \mathcal L(X_t) = \mathcal N(3,\tfrac{1}{2})$.\\

We implement the Euler scheme $(\bar X_t^{1,h},\ldots, \bar X_t^{N,h})_{t \in [0,T]}$ defined in \eqref{discresys}  with $b(t,x,\mu) = -\int_{\R}(x-y)\mu(dy)$ and $\sigma(t,x) = 1$. We pick $T=1$, $h = 10^{-2}T = 10^{-2}$, for several values of the system size $N = 2^7 = 128 ,2^8 = 256,\ldots, 2^{15} = 32768$. We then compute
$$\widehat \mu_T^{N,h,\eta}(x) := N^{-1}\sum_{n = 1}^N \eta^{-d}K\big(\eta^{-1}(x-\bar X_T^{n,h})\big)$$
as our proxy of $\mu_T(x)$ for $\ell = 1, 3, 5, 7, 9$ and the kernels given in Table \ref{demo-table}. We pick $\eta = N^{-1/(2(\ell+1)+1)}$ according to Corollary \ref{cor: error} since $\mu_T \in \mathcal C^k$ for every $k \geq 1$. We repeat the experiment $30$ times to obtain independent Monte-Carlo proxies $(\widehat \mu_T^{N,h,\eta})_j(x)$ for $j=1,\ldots, 30$ and we finally compute the Monte-Carlo strong error
$\mathcal E_N = \frac{1}{30} \sum_{j = 1}^{30}\max_{x \in \mathcal D}\big|(\widehat \mu_T^{N,h,\eta})_j(x)-\mu_T(x)\big|^2,$ 
where $\mathcal D$ is a uniform grid of $1000$ points in $[0,6]$. 
(The domain is dictated by our choice of initial condition $\mu_0 \sim \mathcal N(3,\tfrac{1}{2})$; the probability that a random variable with this distribution falls outside $[0,6]$ is about $2.2 \times 10^{-5}$, which justifies this truncation.)\\

In Figure \ref{fig: error OU}, we display on a log-2 scale $\mathcal E_N$ for $\ell = 1, 3, 5, 7, 9$. In Figure \ref{fig: slopes OU} we display the least-square estimates of the slope of each curve of Figure \ref{fig: error OU} according to a linear model. We thus have a proxy of the rate $\alpha_\ell$ of the error $\mathcal E_N \approx N^{-\alpha_\ell}$ for different values of $\ell$. We clearly see that higher-order kernels are better suited for estimating $\mu_t$. This is of course no surprise from a statistical point of view, but this may have been overlooked in numerical probability simulations.

\begin{figure}[H]
\begin{center}
\includegraphics[width=0.6\linewidth]{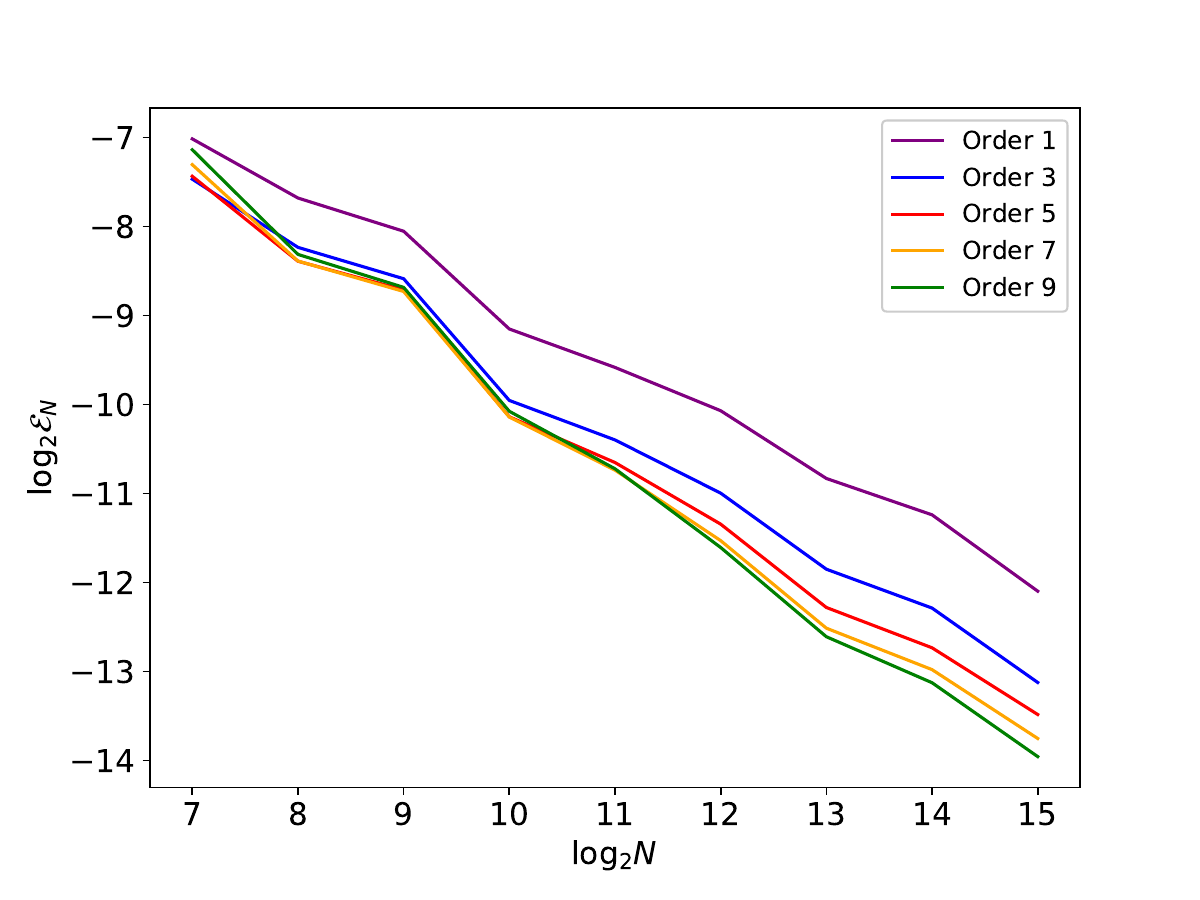} 
\vspace{-0.2cm}
\caption{\label{fig: error OU} {\it Monte-Carlo (for 30 repeated samples) strong error for different kernel orders: $\log_2 \mathcal E_N$ as a function of $\log_2 N$ for $\ell = 1$ (purple), $\ell = 3$ (blue), $\ell = 5$ (red), $\ell = 7$ (orange), $\ell = 9$ (green)}. We see that a polynomial error in $N$ is compatible with the data.} 
\end{center}
\end{figure}


\begin{figure}[H]
\begin{center}
\includegraphics[width=0.6\linewidth]{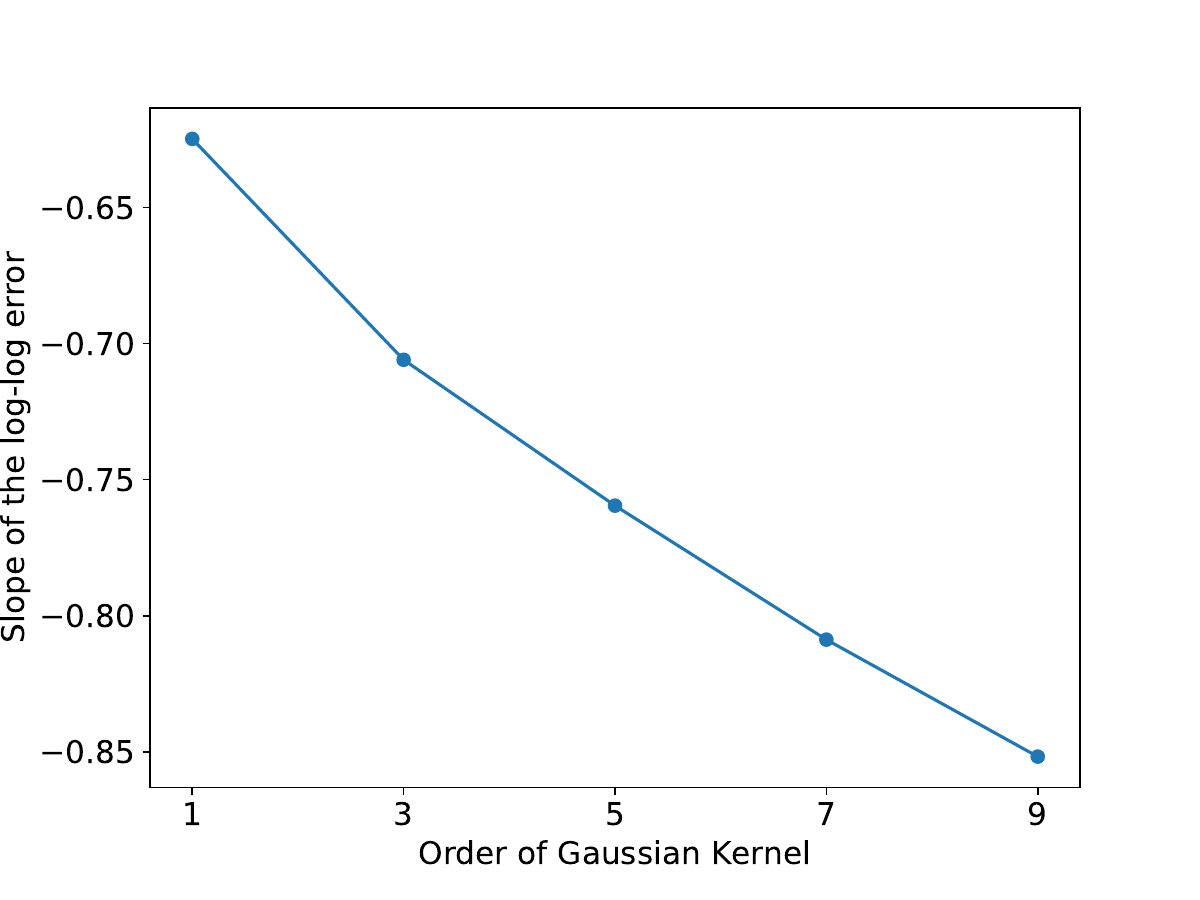} 
\vspace{-0.2cm}
\caption{\label{fig: slopes OU} {\it Least-square estimates of the slope $\alpha_\ell$ of $\log_2 \mathcal E_N = \alpha_\ell \log_2 N+\text{noise}$ in a linear model representation. We plot $\alpha_\ell$ as a function of the order $\ell$ of the kernel}. We see that a higher order $\ell$ for the choice of the kernel systematically improves on the error rate, as predicted by the statistical bias-variance analysis.}
\end{center}
\end{figure}


\subsection{A double layer potential}

We consider an interaction consisting of a smooth long range attractive and small range repulsive force, obtained as the derivative of double-layer Morse potential. Such models are commonly
used (in their kinetic version) in swarming modelling, see for instance \cite{bolley2011stochastic}. The corresponding McKean-Vlasov equation is 
$$dX_t = \int_{\R}U'(X_t - x)\mu_t(dx)dt+dB_t,\;\;\mathcal L(X_0) = \mathcal N(0,1),$$
where we pick $U(x) = -\exp(-x^2)+2\exp(-2x^2)$. The potential $U$ and its derivative $U'$ are displayed in Figure \ref{fig: potential plot}. 

We implement the Euler scheme $(\bar X_t^{1,h},\ldots, \bar X_t^{N,h})_{t \in [0,T]}$ defined in \eqref{discresys} 
for coefficients $b(x,\mu) = \int_{\R}U'(x-y)\mu(dy)$ and $\sigma(t,x) = 1$. We pick $T=1$, $h = 10^{-2}T = 10^{-2}$, for several values of the system size $N = 2^5 = 32 ,2^6 = 64,\ldots, 2^{16} = 65736$. We then compute
$$\widehat \mu_T^{N, h,  \widehat \eta^N(T,x)}(x) := N^{-1}\sum_{n = 1}^N  \widehat \eta^N(T,x)^{-d}K\big( \widehat \eta^N(T,x)^{-1}(x-\bar X_T^{n,h})\big)$$
as our proxy of $\mu_T(x)$ for $\ell = 1, 3, 5, 7, 9$ according to $K$ as in Table \ref{demo-table}. The data-driven bandwidth $\widehat \eta^N(T,x)$ is computed via the minimisation \eqref{eq: def minimisation}, for which one still needs to set the penalty parameter $\varpi$ arising in the Goldenschluger-Lepski method, see \eqref{eq: def variance lepski} in particular. The grid is set as $\mathcal H = \{ (N/\log N)^{-1/(2m+1)}, m=1,\ldots, \ell+1\},$ see in particular Corollary \ref{cor: oracle adaptive} in order to mimick the oracle.


\begin{figure}[H]
\begin{center}
\includegraphics[width=0.6\textwidth]{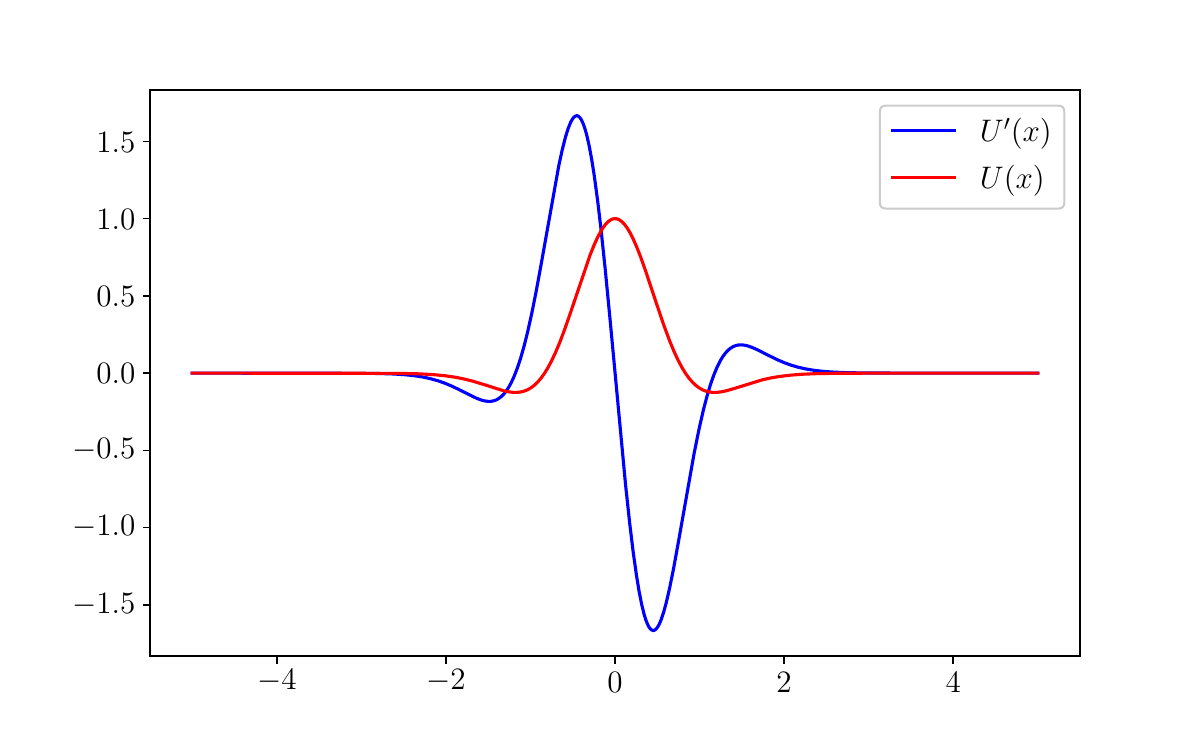} 
\vspace{-0.5cm}
\caption{\label{fig: potential plot} {\it Plot of the potential $U$ (red) and its derivative $U'$ (blue)}. }
\end{center}
\end{figure}

In this setting, we do not have access to the exact solution $\mu_T(x)$; we nevertheless explore several numerical aspects of our method via the following experiments:
\begin{itemize}
\item Investigate the effect of the order $\ell$ of the kernel (for an {\it ad-hoc} choice of the penalty parameter $\varpi$). We know beforehand that the mapping $x \mapsto \mu_T(x)$ is smooth, and we obtain numerical evidence that a higher-order kernel gives better results by comparing the obtained $\widehat \mu_T^{N, h,  \widehat \eta^N(T,x)}(x)$ for different values of $\ell$ as $N$ increases in the following sense: for $N=2^5$ particles, the estimates for high order kernels are closer to the estimates obtained for $N=2^{10}$ than lower order kernels.
\item Investigate the distribution of the data-driven bandwidth $ \widehat \eta^N(T,x)$, for repeated samples as $x$ varies and for different values of the penalty $\varpi$ over the grid $\mathcal H = \{ (N/\log N)^{-1/(2m+1)}, m=1,\ldots, \ell+1\}$. The estimator tends to pick the larger bandwidth with overwhelming probability, which is consistent with our prior knowledge that $x \mapsto \mu_T(x)$ is smooth.
\item In order to exclude an artifact from the preceding experiment, we conduct a cross-experiment by  perturbing the drift with an additional Lipschitz common force (but not smoother) 
that saturates our Assumption \ref{ass: AIII 1}. This extra transport term lowers down the smoothness of $x \mapsto \mu_T(x)$ and by repeating the preceding experiment, we obtain a different distribution of the data-driven bandwidth $ \widehat \eta^N(T,x)$, advocating in favour of a coherent oracle procedure.
\end{itemize}

\begin{figure}[H]
\begin{center}
\includegraphics[width=0.6\textwidth]{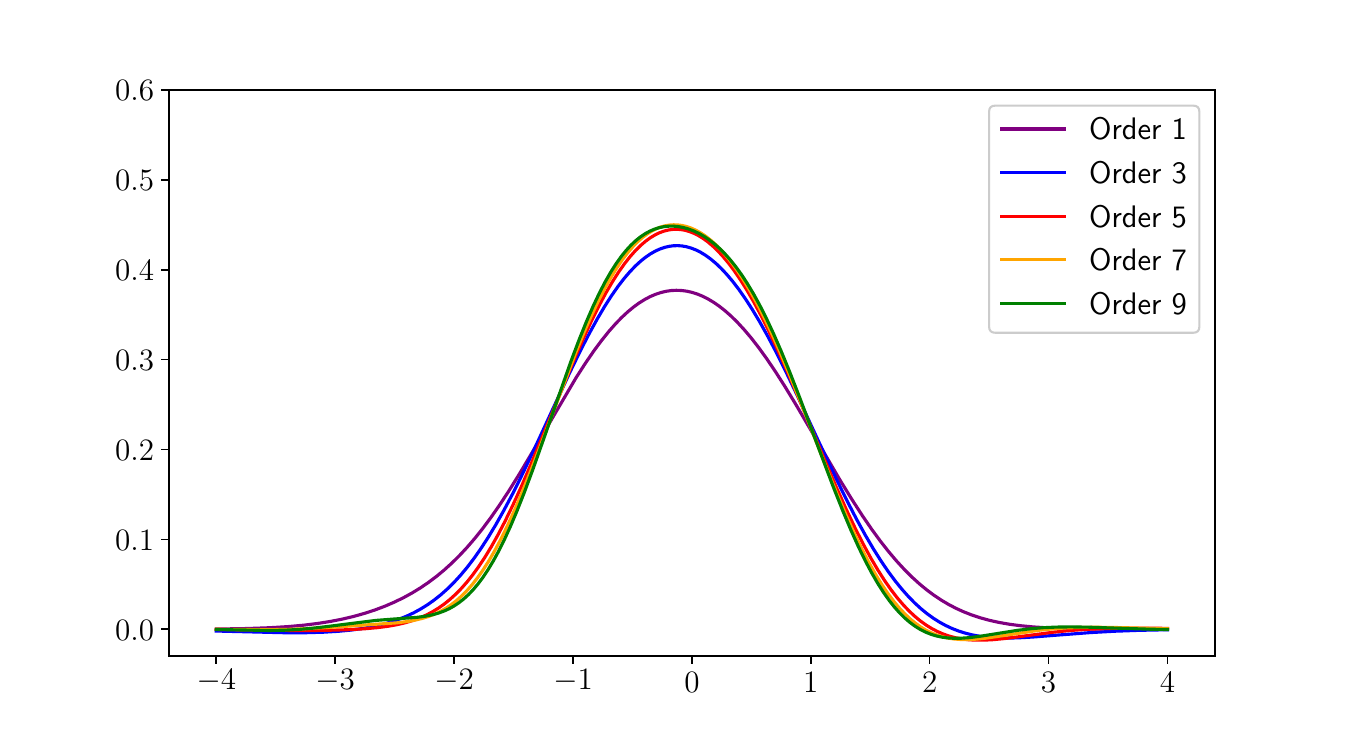}

\includegraphics[width=0.6\textwidth]{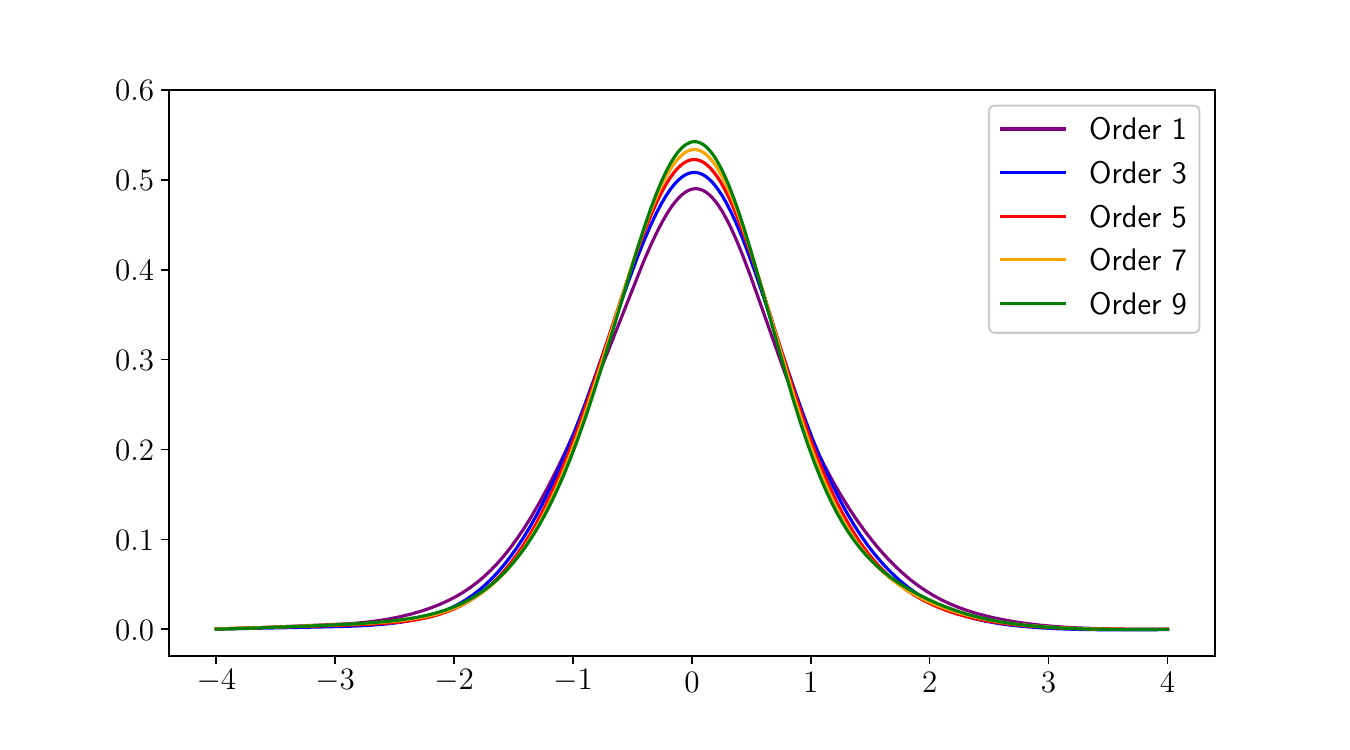}
\vspace{-0.5cm}
\caption{\label{fig: doublelayer_bandwidth_effect} {\it The graph of $x \mapsto \widehat \mu_t^{N, h,  \widehat \eta^N(t,x)}(x)$}. The domain $x \in [-4,4]$ is computed over a discrete grid of $2000$ points, {\it i.e.} mesh $4 \cdot  10^{-3}$) for $N = 2^5$ (Left) and $N = 2^{10}$ (Right). }
\end{center}
\end{figure}

\begin{figure}[H]
\begin{center}
\includegraphics[width=0.6\textwidth]{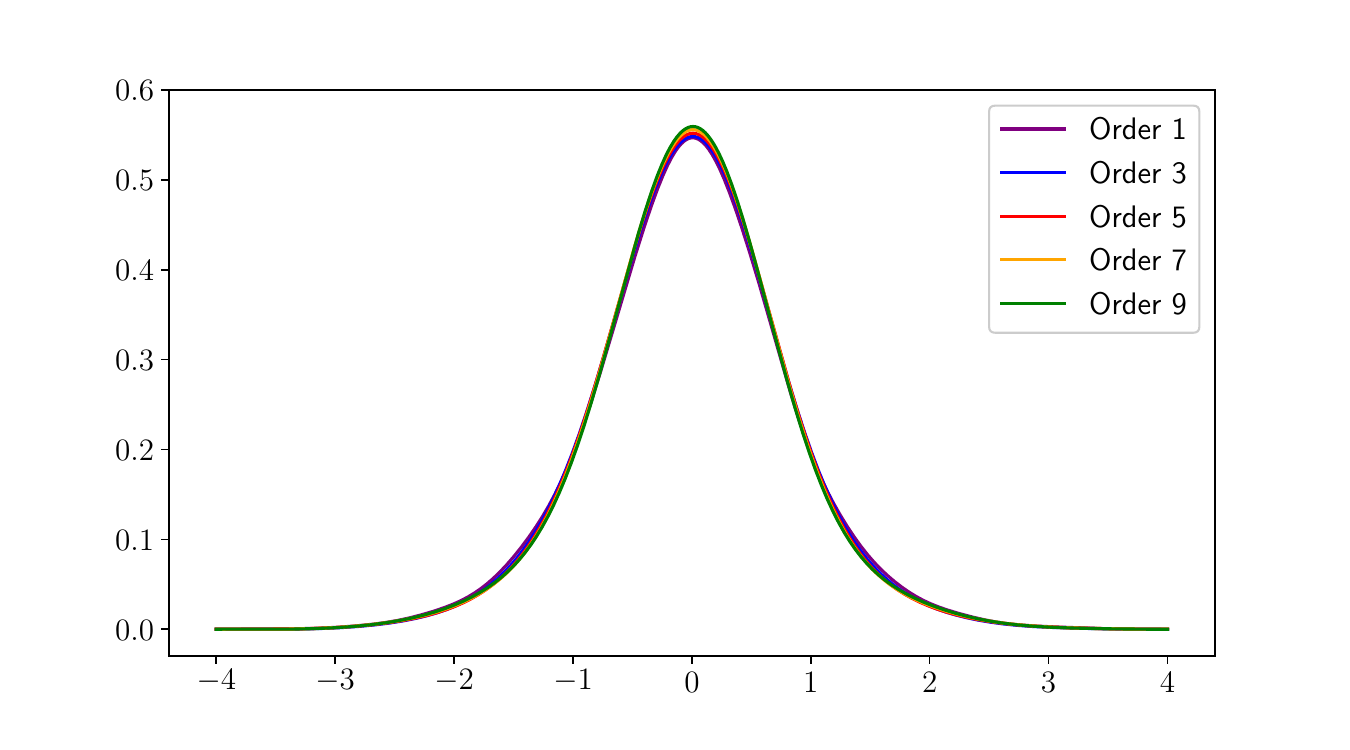} 
\vspace{-0.5cm}
\caption{\label{fig: doublelayer_bandwidth_ultimate} {\it Same experiment as in Figure \ref{fig: doublelayer_bandwidth_effect} mimicking an asymptotic behaviour of the procedure for $N=2^{15}$}.}
\end{center}
\end{figure}

\vspace{-0.5cm}

\subsubsection*{The effect of the order $\ell$ of the kernel}
We display in Figure \ref{fig: doublelayer_bandwidth_effect} the graph of $x \mapsto \widehat \mu_T^{N, h,  \widehat \eta^N(T,x)}(x)$ for $T=1$, $\ell = 1, 3, 5, 7, 9$ for $N=2^5$ and $N = 2^{10}$. The tuning parameter in the choice of $\widehat \eta^N(T,x)$ is set to
\footnote{This choice is quite arbitrary, the values of the data-driven bandwidths showing stability as soon as $\varpi \geq 20$.} 
$\varpi = 23$. The same experiment is displayed in Figure \ref{fig: doublelayer_bandwidth_ultimate}  for $N=2^{15}$.

The value $N=2^{15}$ mimicks the asymptotic performance of the procedure as compared to $N=2^5$ or $N=2^{10}$. We observe that the effect of the order of the kernel $\ell$ is less pronounced. A visual comparison of Figure \ref{fig: doublelayer_bandwidth_effect} and \ref{fig: doublelayer_bandwidth_ultimate} suggests that a computation with higher order kernels always perform better, since the shape obtained for $\ell = 9$ for small values of $N$ is closer to the asymptotic proxy $N = 2^{15}$ than the results obtained with smaller values of $\ell$.

\subsubsection*{The distribution of the data-driven bandwidth}

We pick several values for $\varpi$ (namely $\varpi  = 20^{-1}, 10^{-1}, 1, 10, 20$) and compute $\widehat \eta^N(T,x)$ accordingly for $1200$ samples.
Figure \ref{fig: distribution bandwidth} displays the histogram of the $\widehat \eta^N(T,x)$ for $x \in [-4, 4]$ (discrete grid with mesh $8\cdot 10^{-2}$) for $\ell = 3$ and $\ell = 5$. We observe that the distribution is peaked around large bandwidths at the far right of the spectrum of the histogram, with comparable results for $\ell = 7$ and $\ell = 9$. In Figure \ref{fig: restricted distribution bandwidth}, we repeat the experiment  for $\ell = 5$ over the restricted domain $[-\tfrac{1}{2}, \tfrac{1}{2}]$  (discrete grid, mesh $10^{-2}$) where we expect the solution $\mu_T(x)$ to be more concentrated and see no significant difference. This result is  in line with  the statistical nonparametric estimation of a smooth signal (see {\it e.g.} the classical textbook \cite{silverman2018density}). 

\subsubsection*{Cross experiment by reducing the smoothness of $x \mapsto \mu_{T}(x)$}

We repeat the previous experiment by adding a perturbation in the drift, via a common force $V'(x) = 2(1-|x|){\bf 1}_{\{|x| \leq 1\}}$. The drift now becomes $b(x,\mu) = V'(x)+\int_{\R}U'(x-y)\mu(dy)$. The common transport term $V'(x)$ is no smoother than Lipschitz continuous thus reducing the smoothness of the solution $x \mapsto \mu_T(x)$. In the same experimental conditions as before, Figure \ref{fig: restricted distribution bandwidth perturbed} displays the distribution of $\widehat \eta^N(T,x)$ and is to be compared with Figure \ref{fig: restricted distribution bandwidth}. We observe that the distribution is modified, in accordance with the behaviour of the oracle  bandwidth of a signal with lower order of smoothness. This advocates as further empirical evidence of the coherence of the method.

\vspace{1cm}

\begin{figure}[H]
\begin{center}
\includegraphics[width=0.55\textwidth]{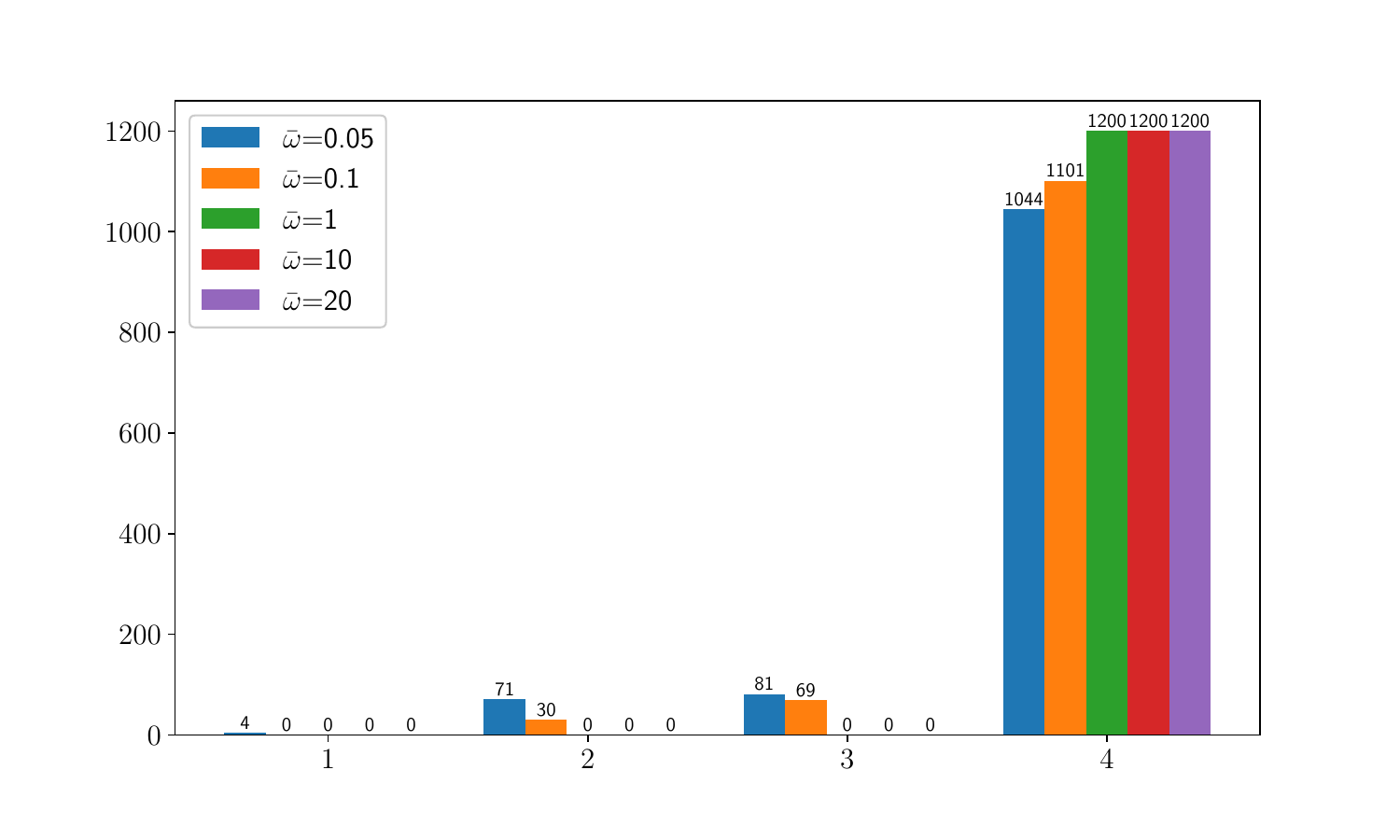}

\includegraphics[width=0.8\textwidth]{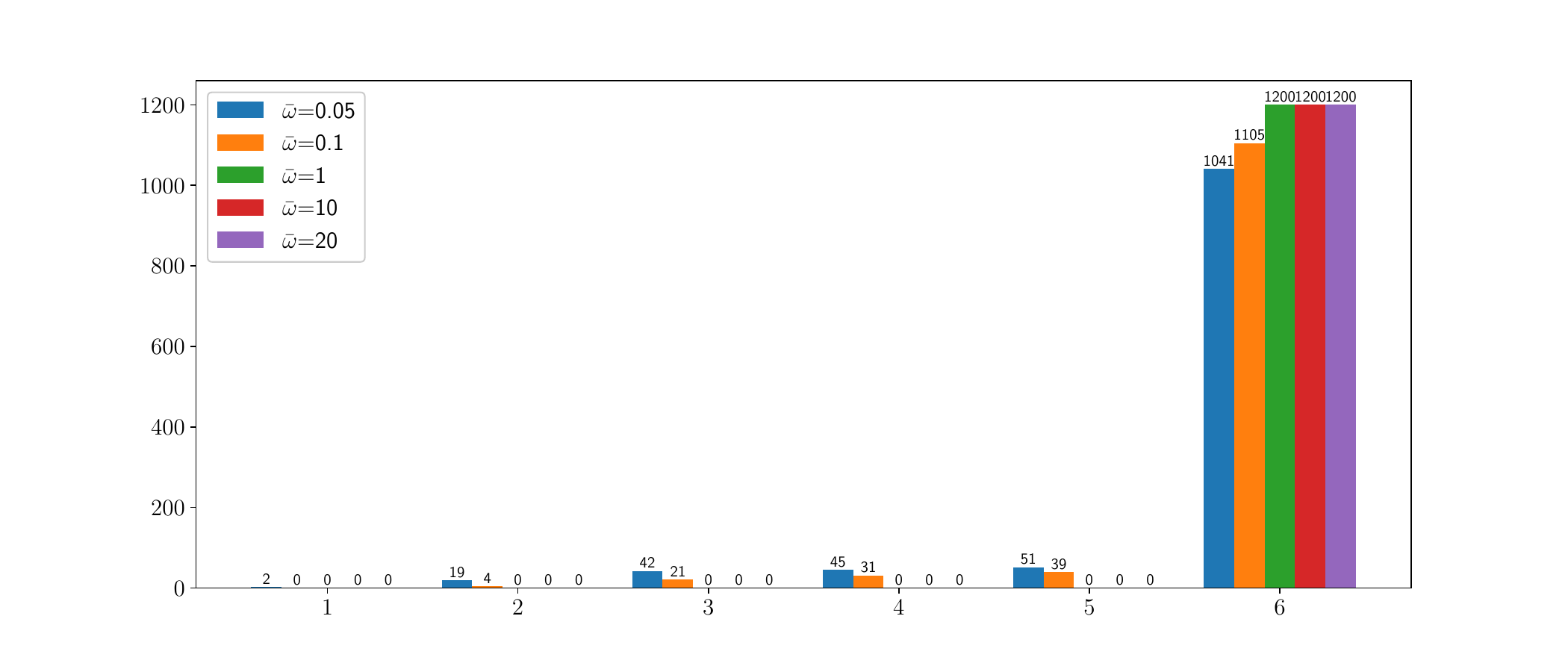}
\vspace{-0.5cm}
\caption{\label{fig: distribution bandwidth} {\it Distribution of $\widehat \eta^N(T,x)$}. The domain $x \in [-4,4]$ is computed over a discrete grid of $100$ points, {\it i.e.} mesh $8 \cdot 10^{-2}$), $N = 2^5, 2^6, \cdots , 2^{16}$ for $\ell =  3$ (\textit{Top}) and $\ell = 5$ (\textit{Bottom}).}
\end{center}
\end{figure}

\vspace{-1cm}

\begin{figure}[H]
\begin{center}
\includegraphics[width=0.8\textwidth]{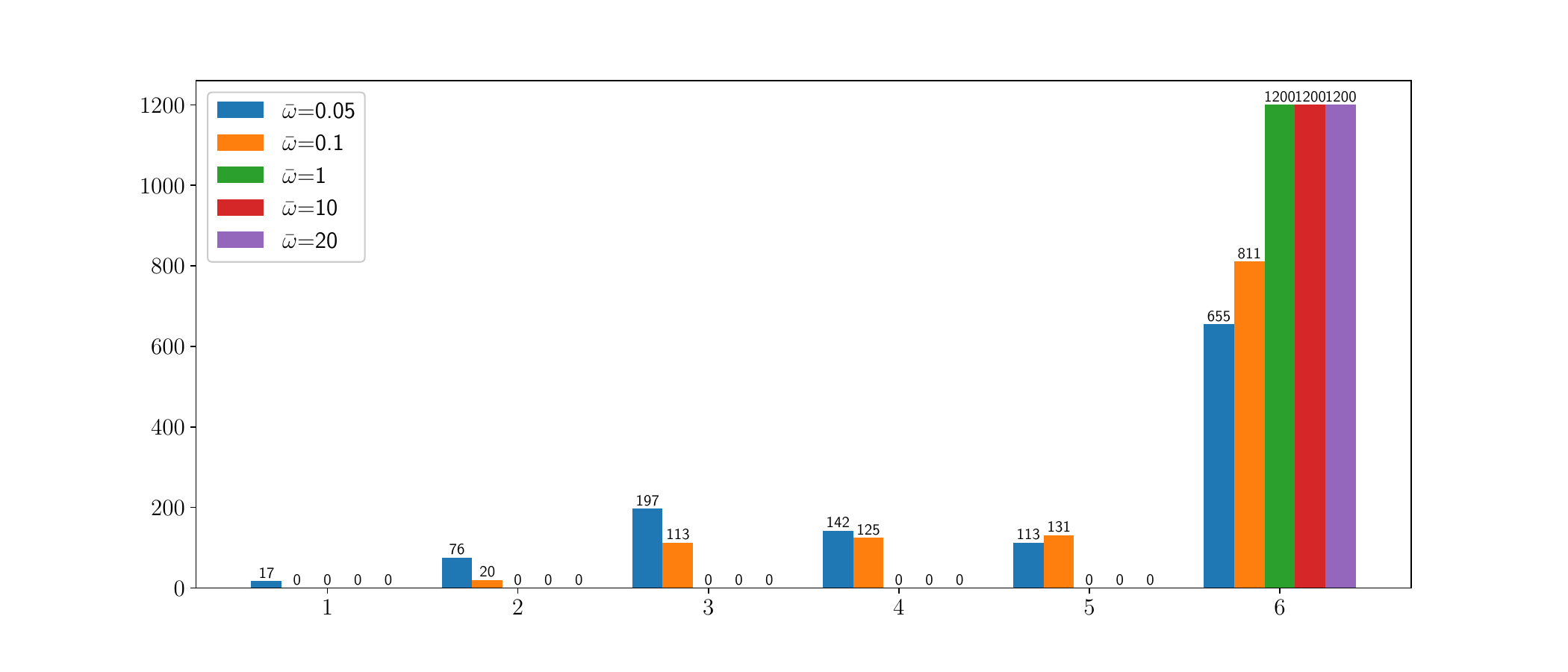} 
\vspace{-0.5cm}
\caption{\label{fig: restricted distribution bandwidth} {\it Same experiment as in Figure \ref{fig: distribution bandwidth} (\textit{Bottom}) on  the restricted domain $[-\tfrac{1}{2}, \tfrac{1}{2}]$ over a discrete grid of $100$ points}. The results are comparable with the experiment displayed in Figure \ref{fig: distribution bandwidth} (\textit{Bottom}).}
\end{center}
\end{figure}


\begin{figure}[H]
\begin{center}
\includegraphics[width=0.8\textwidth]{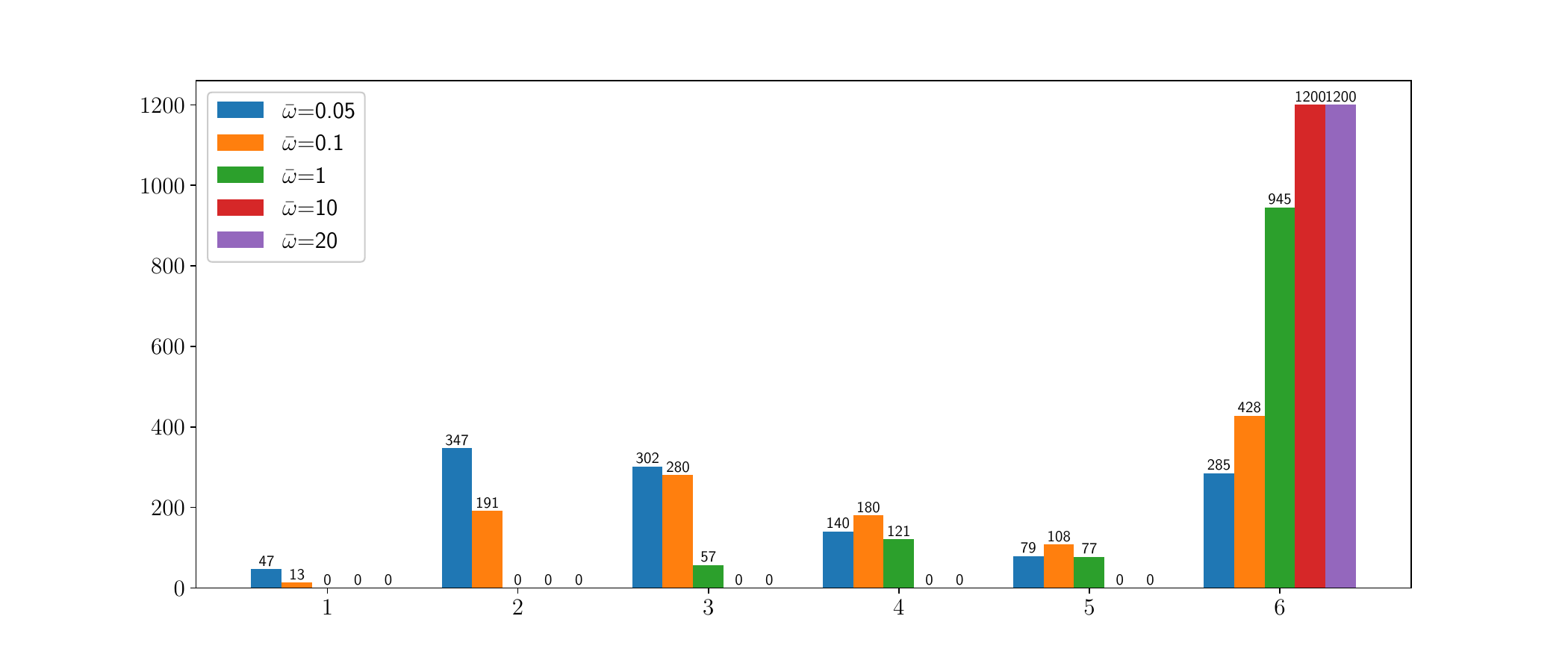} 
\vspace{-0.5cm}
\caption{\label{fig: restricted distribution bandwidth perturbed} {\it Same experiment as in Figure \ref{fig: restricted distribution bandwidth} on the restricted domain $[-\tfrac{1}{2}, \tfrac{1}{2}]$ over a discrete grid of $100$ points, when adding a common perturbation force $V'(x) = 2(1-|x|){\bf 1}_{\{|x| \leq 1\}}$ in the drift.} The effect on the empirical distribution of $\widehat \eta^N(T,x)$ is in line with the behaviour of an oracle bandwidth that adjusts smaller bandwidths when the signal is less smooth.}
\end{center}
\end{figure}
\vspace{-0.6cm}

\subsection{The Burgers equation in dimension $d=1$} \label{sec: burgers}
We consider here the following McKean-Vlasov equation 
$$dX_t = \int_{\R} H(x-y)\mu_t(dy)dt+\sigma dB_t,\;\;\mathcal L(X_0) = \delta_0,$$
with $\sigma =  \sqrt{0.2}$, associated with the Burgers equation in dimension $d=1$ for $H(x-y) = {\bf 1}_{\{y \leq x\}}$.  This model has already been introduced in \cite{bossy1997stochastic}, and the interacting particle system \eqref{discresys} matches  Equation~(2.3) of \cite{bossy1997stochastic}.  Although the discontinuity at $y=x$ rules out our Assumption \ref{ass: AIII 1}, we may nevertheless implement our method, since a closed form formula is available for  $\mu_t(x)$. More specifically, for $t>0$, the cumulative density function of $\mu_t(x)$ is explicitly given by 
$$M_t(x) = \frac{\int_0^\infty \exp\big(-\frac{1}{\sigma^2}\big(\frac{(x-y)^2}{2t}+y\big)\big)dy}{\int_{-\infty}^0 \exp\big(-\frac{1}{\sigma^2}\frac{(x-y)^2}{2t}\big)dy+\int_0^\infty \exp\big(-\frac{1}{\sigma^2}\big(\frac{(x-y)^2}{2t}+y\big)\big)dy}.$$

\vspace{-0.6cm}
\begin{figure}[H]
\begin{center}
\hspace{-0.6cm}\includegraphics[width=0.5\textwidth]{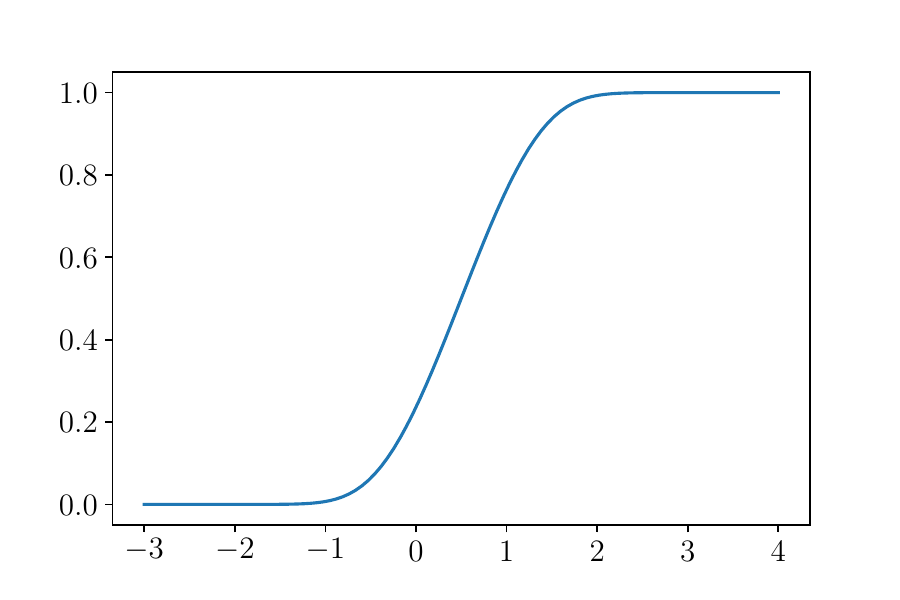}
\includegraphics[width=0.5\textwidth]{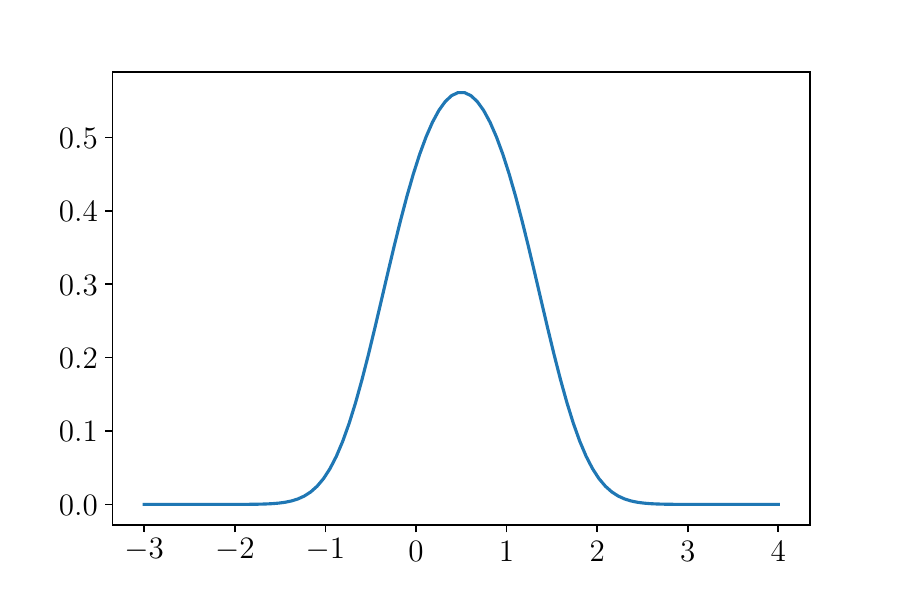}
\vspace{-0.2cm}
\caption{\label{fig: graph burgers} {\it Plots of $x \mapsto M_T(x)$ (Left) and $x \mapsto \mu_T(x)$ (Right).}}
\end{center}
\end{figure}

Figure \ref{fig: graph burgers} displays the graph of $x \mapsto M_T(x)$ and $x \mapsto \mu_T(x)$  for $T=1$. We display in Figure \ref{fig: reconstruct burgers} the reconstruction of $x \mapsto \mu_T(x)$ via $\widehat \mu_T^{N, h,  \widehat \eta^N(T,x)}(x)$ for $N = 2^{10}$ for several values of $\ell$. The value $\ell = 9$ provides with the best fit, showing again the benefit of higher-order kernels compared to a standard Gaussian kernel (with $\ell=1$). Similar results were obtained for other values of $N$. Yet, the distribution of $\widehat \eta^N(T,x)$ displayed in Figure \ref{fig: histogram bandwidth burgers} shows that our method tends to undersmooth the true density. The effect  is  less pronounced for $N=2^{15}$. In any case, our numerical results are comparable with \cite{bossy1997stochastic}.

\begin{figure}[H]
\begin{center}
\hspace{-1.8cm}\includegraphics[width=0.58\textwidth]{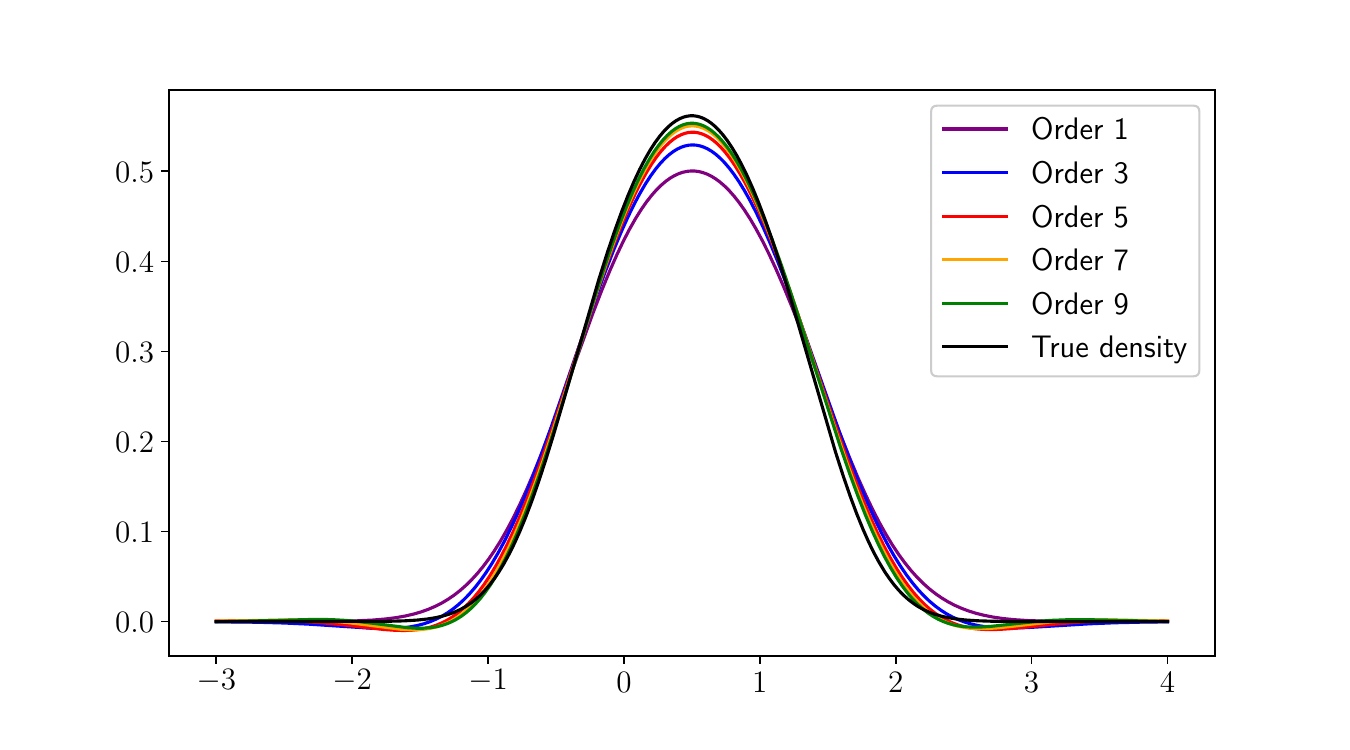} \hspace{-1.5cm}
\includegraphics[width=0.58\textwidth]{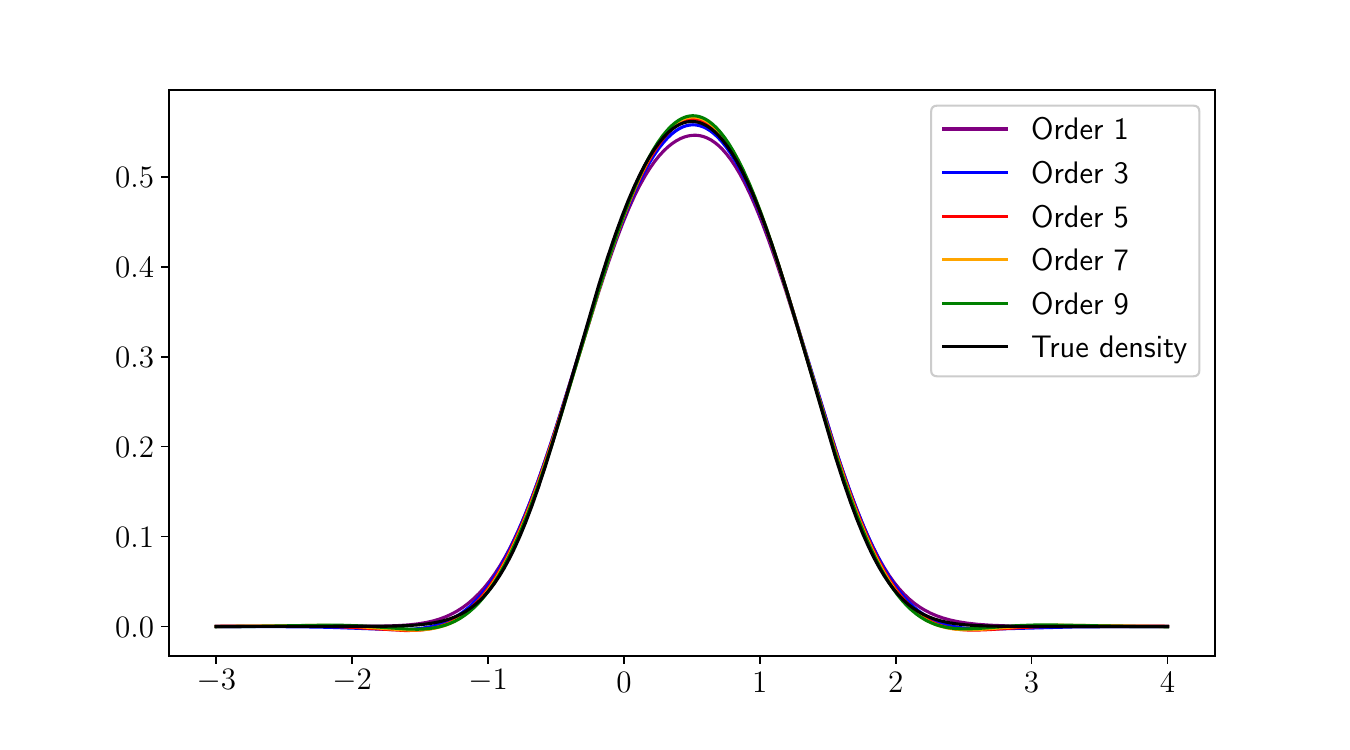} \hspace{-1.8cm}
\vspace{-0.5cm}
\caption{\label{fig: reconstruct burgers} {\it Reconstruction of $\mu_T$ by $\widehat \mu_T^{N, h,  \widehat \eta^N(T,\cdot)}$ with $\varpi = 23$, for different kernel orders for $N=2^{10}$ (Left) and $N=2^{15}$ (Right).} Higher order kernels outperform the reconstruction provided with standard Gaussian kernels ($\ell = 1$) for $N=2^{10}$.}
\end{center}
\end{figure}


\begin{figure}[H]
\begin{center}
\includegraphics[width=0.9\textwidth]{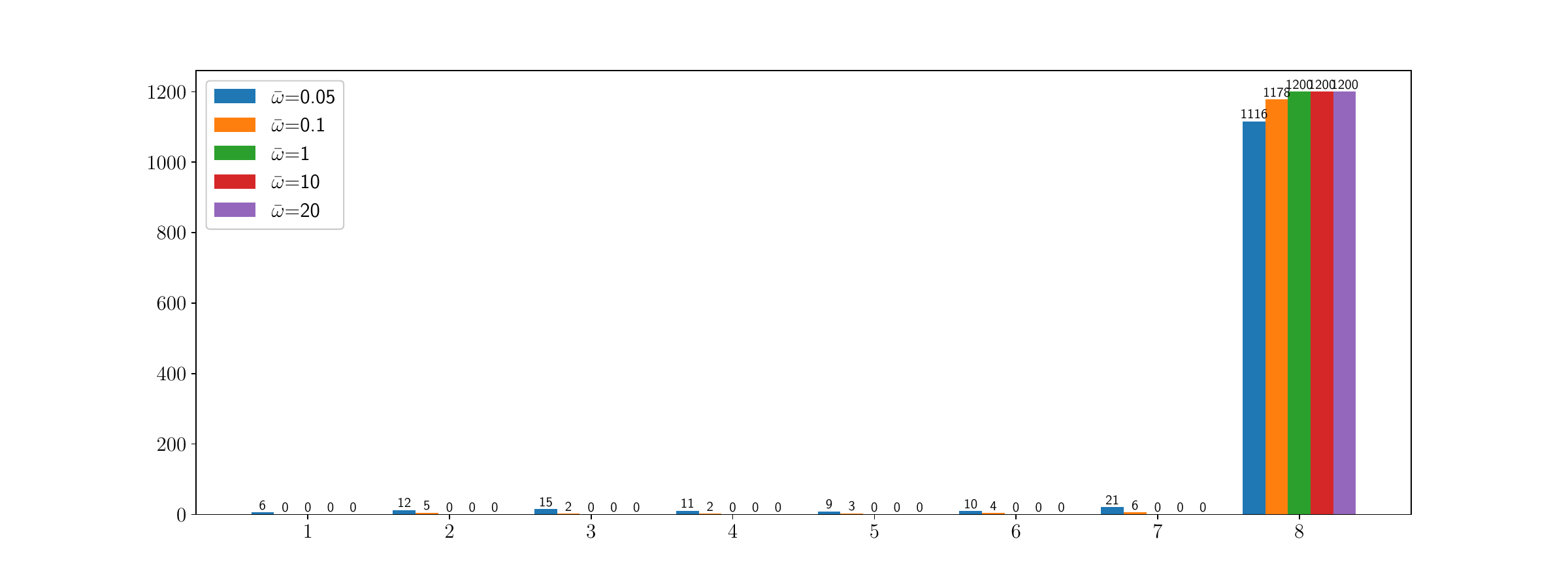} 
\vspace{-0.5cm}
\caption{\label{fig: histogram bandwidth burgers} {\it Distribution of $\widehat \eta^N(T,x)$}. The domain $x \in [-3,4]$ is computed for 1200 samples over a discrete grid of $100$ points, {\it i.e.} mesh $8 \cdot 10^{-2}$, $N = 2^5, 2^6, \ldots, 2^{16}$ for $\ell = 7$. The method tends pick the largest bandwidth.}
\end{center}
\end{figure}

\section{Proofs} \label{sec: proofs}

\subsection{Proof of Theorem \ref{thm: deviation}}
The proof of Theorem \ref{thm: deviation} relies on Proposition~\ref{prop: bernstein} below, which establishes a Bernstein inequality for the fluctuations of $\bar \mu_T^{N,h}(dx)-\bar \mu_T^h(dx)$.   Recall that $\bar \mu_T^{N,h}$ denotes the empirical distribution of the interacting particle system at time $T$ generated by  \eqref{discresys}, while $\bar \mu_T^h$ denotes the law of the random variable defined by the abstract Euler scheme (see Definition \ref{def:abstract-euler-scheme}).  
\begin{prop} \label{prop: bernstein}
Work under Assumptions \ref{ass: AI}, \ref{ass: AII} and \ref{ass: AIII 1}. Let $N\geq 2$ and $Nh\leq \mathfrak C$. For any real-valued bounded function $\varphi$ defined on $\R^d$ and any $\varepsilon \geq 0$, we have:
\begin{equation} \label{eq: main Bernstein}
\PP\Big( \int_{\R^d}\varphi \,d(\bar \mu_T^{N,h}-\bar \mu_T^h) \geq \varepsilon \Big) \leq \kappa_1 \exp\Big(-\frac{\kappa_2N\varepsilon^2}{|\varphi|^2_{L^2(\mu_T)}+|\varphi|_\infty\varepsilon}\Big),
\end{equation}
where $\kappa_1$ depends on $\mathfrak{C}$ and $\Cmodel$, and $\kappa_2$ only on $\Cmodel$, both specified explicitly in the proof. 
\end{prop}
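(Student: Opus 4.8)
The plan is to transport the interacting particle system to a product measure via Girsanov's theorem, so that after the change of measure the particles become i.i.d., and then apply a classical Bernstein inequality under the new measure together with control of the Radon–Nikodym density. Concretely, let $\bar X^{n,h}$, $1\le n\le N$, be the interacting Euler scheme of \eqref{discresys}, driven by the Brownian motions $B^n$, and let $Y^{n,h}$ be $N$ independent copies of the abstract Euler scheme \eqref{eq: euler abstract}, i.e.\ driven by the \emph{same} $B^n$ but with the McKean-Vlasov marginals $\mu_{t_m}$ in place of the empirical measures $\bar\mu^{N,h}_{t_m}$ in the drift. Because the diffusion coefficient $\sigma(t_m,\cdot)$ is common to both systems and is uniformly elliptic (Assumption \ref{ass: AII}), the two piecewise-constant drift processes differ only through the finite-dimensional shift $b(t_m,\cdot,\bar\mu^{N,h}_{t_m})-b(t_m,\cdot,\mu_{t_m})$, so there is an explicit Girsanov density $Z_N=\exp\big(\sum_m \theta_m\cdot\Delta B^m - \tfrac12\sum_m|\theta_m|^2 h\big)$, with $\theta_m$ built from $\sigma^{-1}$ times that drift difference, such that under $\QQ_N := Z_N\,d\PP$ the law of $(\bar X^{1,h},\dots,\bar X^{N,h})$ coincides with that of $(Y^{1,h},\dots,Y^{N,h})$ — that is, $N$ i.i.d.\ abstract Euler schemes with common one-dimensional marginal $\bar\mu^h_T$ at time $T$.

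Next I would estimate, for the test function $\varphi$, the deviation
$\PP\big(N^{-1}\sum_n\varphi(\bar X^{n,h}_T)-\E_{\bar\mu^h_T}[\varphi]\ge\varepsilon\big)$
by a Cauchy–Schwarz (or rather a refined Hölder/Cauchy–Schwarz) split: $\PP(A)\le \QQ_N(A)^{1/2}\,\E_{\QQ_N}[Z_N^{-1}\mathbf 1_A]^{1/2}$, or more efficiently an entropy/Chernoff argument $\PP(A)\le e^{-\lambda}+\QQ_N(A')$ where the control of $Z_N$ is absorbed into an exponential-moment bound. Under $\QQ_N$ the sum $N^{-1}\sum_n\varphi(Y^{n,h}_T)$ is an average of i.i.d.\ bounded variables with mean $\int\varphi\,d\bar\mu^h_T$, variance $\le |\varphi|^2_{L^2(\bar\mu^h_T)}$; since $\sup_y|\bar\mu^h_T(y)-\mu_T(y)|$ is of order $h$ by Proposition \ref{densityeuler}, $|\varphi|^2_{L^2(\bar\mu^h_T)}\le |\varphi|^2_{L^2(\mu_T)}+C h |\varphi|_\infty^2$, and this extra term is harmless because $Nh\le\mathfrak C$. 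Then the classical Bernstein inequality gives exactly a bound of the form $\exp\big(-cN\varepsilon^2/(|\varphi|^2_{L^2(\mu_T)}+|\varphi|_\infty\varepsilon)\big)$ under $\QQ_N$. The two-sided statement follows by applying this to $\varphi$ and $-\varphi$, which also explains the prefactor $\kappa_1$.

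The main obstacle is controlling the Girsanov density $Z_N$: one needs a uniform (in $N$) bound on a suitable exponential moment of $\tfrac12\sum_m|\theta_m|^2 h = \tfrac12\sum_m |\sigma^{-1}(t_m,\bar X^{m,h}_{t_m})(b(t_m,\bar X^{m,h}_{t_m},\bar\mu^{N,h}_{t_m})-b(t_m,\bar X^{m,h}_{t_m},\mu_{t_m}))|^2 h$. Here $|b(t_m,x,\bar\mu^{N,h}_{t_m})-b(t_m,x,\mu_{t_m})|\le |b|_{\Lip}\,\mathcal W_1(\bar\mu^{N,h}_{t_m},\mu_{t_m})$ by \eqref{inq:lipb}, and one must bound $\E[\exp(c\,\mathcal W_1(\bar\mu^{N,h}_{t_m},\mu_{t_m})^2)]$ uniformly in $m$ and $N$. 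This requires: (i) sub-Gaussianity of $\mu_t$ and of the Euler scheme — here Assumption \ref{ass: AI} is the gateway, propagated through the Euler steps using the boundedness/growth conditions on $b,\sigma$; (ii) fine $\mathcal W_1$-estimates between $\bar\mu^{N,h}_{t}$ and $\mu_t$, combining the propagation-of-chaos/empirical-measure rate with the Euler discretisation error, for which I would invoke the optimal Wasserstein bounds of Liu \cite{liu2019optimal} together with a triangle-inequality passage through $\bar\mu^h_t$. Assembling these exponential-moment bounds into a single constant $\kappa_2$ (depending only on $\mathfrak C$, $T$, $\mu_0$, $b$, $\sigma$) and tracking that the $Nh\le\mathfrak C$ constraint keeps everything uniform in $N$ is the technically delicate part; the rest is the by-now standard change-of-measure + i.i.d.-Bernstein machinery, following \cite{lacker2018on} and \cite{della2021nonparametric} but adapted to the discretised scheme.
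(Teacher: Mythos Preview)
Your overall architecture --- Girsanov to decouple the particles, classical Bernstein on the resulting i.i.d.\ system, transfer back --- is the paper's, but your control of the Girsanov density has a real gap. The quadratic variation of the Girsanov martingale is a sum over all $N$ particles,
\[
\langle L^{N,h}\rangle_T \;\lesssim\; \sum_{n=1}^N \int_0^T \big|b(\underline s, Y^{n,h}_{\underline s}, \widetilde\mu^{N,h}_{\underline s}) - b(\underline s, Y^{n,h}_{\underline s}, \mu_{\underline s})\big|^2\,ds,
\]
so bounding each summand by $|b|_{\Lip}^2\,\mathcal W_1(\widetilde\mu^{N,h}_{\underline s},\mu_{\underline s})^2$ forces you to control $\E\big[\exp\big(cN\mathcal W_1^2\big)\big]$ uniformly in $N$ --- not $\E[\exp(c\,\mathcal W_1^2)]$ as you wrote. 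For i.i.d.\ samples in dimension $d\ge 3$ the empirical Wasserstein distance is of order $N^{-1/d}$, hence $N\mathcal W_1^2\sim N^{1-2/d}\to\infty$ and the exponential moment diverges; the Wasserstein--Lipschitz route cannot close. The paper instead exploits the \emph{linear} form of $b$ in Assumption~\ref{ass: AIII 1}: after splitting off the deterministic piece $\mathcal W_1(\bar\mu^h_{\underline s},\mu_{\underline s})^2\lesssim h$ (whose contribution is $Nh\le\mathfrak C$, the only place this constraint enters), one writes
\[
b(\underline s, Y^{n,h}_{\underline s}, \widetilde\mu^{N,h}_{\underline s}) - b(\underline s, Y^{n,h}_{\underline s}, \bar\mu^{h}_{\underline s}) = \tfrac1N\sum_{n'}\xi^{n,n'}_s,\quad \xi^{n,n'}_s=\widetilde b(\underline s,Y^{n,h}_{\underline s},Y^{n',h}_{\underline s})-\int\widetilde b(\underline s,Y^{n,h}_{\underline s},y)\,\bar\mu^h_{\underline s}(dy),
\]
a conditionally centred i.i.d.\ average that is sub-Gaussian with variance proxy $O(1/N)$ \emph{independently of $d$}. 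This is what makes $N\times O(1/N)=O(1)$ in the exponent.

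A second point you underestimate: a single Cauchy--Schwarz of the type $\PP(A)\le\QQ(A)^{1/2}\E[\cdots]^{1/2}$ would require a global bound on $\E_\PP[\exp(c\langle L^{N,h}\rangle_T)]$, which is not available directly. The paper uses Lacker's iterated time-slicing: one shows $\E_\PP[\QQ(A\mid\mathcal F_{t_{j-1}})]\le \E_\PP[\QQ(A\mid\mathcal F_{t_j})]^{1/4}\,\E_\PP[\exp(2(\langle L\rangle_{t_j}-\langle L\rangle_{t_{j-1}}))]^{1/4}$ and iterates over a partition of $[0,T]$ with mesh $\le\delta(2)$, so that only the short-time exponential moments of Lemma~\ref{expp} are needed. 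This yields $\QQ(A)\le\PP(A)^{4^{-m}}C'^{m(m+1)/8}$ and the explicit constants $\kappa_1=C'^{m(m+1)/8}$, $\kappa_2=2^{-1}4^{-m}$.
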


The proof of  Proposition \ref{prop: bernstein}  follows the strategy of Theorem 18 in \cite{della2021nonparametric}. We repeat the main steps in order to remain self-contained and highlight the important modifications we need in the context of Euler scheme approximations.  A key ingredient of the proof of Proposition \ref{prop: bernstein} is the comparison between the interacting particle system defined in \eqref{discresys} and an auxiliary system 
$(Y_{t}^{1,h},\ldots,Y_{t}^{N,h})_{t\in[0,T]}$ consisting of $N$ independent copies of the abstract Euler scheme \eqref{eq: euler abstract} (i.e., without interaction):  
\begin{equation}\label{sysY}
\begin{cases}
dY_{t}^{n,h}=b(\underline{t}, Y_{\underline{t}}^{n,h}, \mu_{\underline{t}})dt + \sigma(\underline{t}, Y_{\underline{t}}^{n,h})dB_{t}^{n}, \quad 1 \leq n \leq N,\\
(Y_{0}^{1, h}, \ldots , Y_{0}^{N, h}) = (\bar{X}_{0}^{1, h}, \ldots , \bar{X}_{0}^{N, h}),\\
\end{cases}
\end{equation}
For $t\in[0, T]$, let
\begin{equation}\label{eq:def-process-L}
 L^{N,h}_t := \sum_{n = 1}^N \int_{0}^{t}\big((c^{-1/2} b)(\underline{s}, Y_{\underline{s}}^{n,h}, \tfrac{1}{N}\sum_{n=1}^{N}\,\delta_{Y^{n,h}_{\underline{s}}})-(c^{-1/2}b)(\underline{s}, Y_{\underline{s}}^{n,h}, \mu_{\underline{s}})\big)^\top dB_{s}^{n},   
\end{equation}
where $c^{-1/2}$ is any square root of $c^{-1}=(\sigma \sigma^\top)^{-1}$, which is well defined by the ellipticity assumption \ref{ass: AII}. 
The next lemma provides a key estimate that enables a change of probability argument in the proof of Proposition~\ref{prop: bernstein}. 
Its proof is postponed to the end of this section.
\begin{lemma}\label{expp}
Work under Assumptions \ref{ass: AI}, \ref{ass: AII} and \ref{ass: AIII 1}. Assume that $Nh \leq  \mathfrak C$ for some constant $\mathfrak C>0$. For every $\kappa>0$, there exists $\delta(\kappa)>0$ such that 
\begin{equation}\label{newlem4-2}
\forall\,\delta\in [0, \delta(\kappa)],  \quad\quad\,\sup_{t\in[0, T-\delta(\kappa)]}\mathbb{E}_{\mathbb{P}}\Big[\exp\Big(\kappa\big(\langle L_{\cdot}^{N,h}\rangle_{t+\delta}-\langle L_{\cdot}^{N,h}\rangle_{t}\big)\Big)\Big]\leq  C' 
\end{equation}
where $C' >0$ only depends on $\mathfrak C$  and $\Cmodel$. 
\end{lemma}

\begin{proof}[Proof of Proposition \ref{prop: bernstein}]
We work on a rich enough filtered probability space  $(\Omega, \mathcal{F}, (\mathcal{F}_{t})_{t\geq0}, \mathbb{P})$ in order to accomodate all the random quantities needed.
First, note that the abstract Euler scheme $(\bar{X}^h_{t})_{t\in[0, T]}$ defined by  \eqref{eq: euler abstract} solves
\begin{equation}\label{eqeuler}
d\bar{X}^h_{t}=b(\underline{t}, \bar{X}^h_{\underline{t}}, {\mu}_{\underline{t}})dt+\sigma(\underline{t}, \bar{X}^h_{\underline{t}}) dB_t, \quad \mathcal L(\bar{X}_{0})=\mu_0,
\end{equation}
where $\underline{t}$ is defined by 
\[\forall\;  0 \leq m\leq M \text{ and } t\in[t_m, t_{m+1}), \quad \underline{t}\coloneqq t_{m}.\]
Similarly,  $(\bar{X}_{t}^{1,h}, ..., \bar{X}_{t}^{N,h})_{t\in[0, T]}$ defined by \eqref{discresys} solves
\begin{equation}\label{eqpart}
\begin{cases}
d\bar{X}^{n, h}_{t}=b(\underline{t}, \bar{X}^{n, h}_{\underline{t}}, \bar{\mu}^{N,h}_{\underline{t}})dt+\sigma(\underline{t}, \bar{X}^{n, h}_{\underline{t}}) dB^{n}_t,\quad 1\leq n\leq N,\\
\bar{\mu}_{\underline{t}}^{N,h}=\frac{1}{N}\sum_{n=1}^{N}\delta_{\bar{X}^{n, h}_{\underline{t}}},\\
\mathcal L(\bar{X}_{0}^{1, h}, \ldots , \bar{X}_{0}^{N, h}) = \mu_0^{\otimes N}.
\end{cases}
\end{equation}
\textsc{Step 1.}  Consider the particle system $(Y_{t}^{1,h},\ldots,Y_{t}^{N,h})_{t\in[0,T]}$ defined in \eqref{sysY} and the process $(L^{N,h}_t)_{t\in[0,T]}$ introduced in \eqref{eq:def-process-L}.
For $t\in[0,T]$, define
\[
\mathcal{E}_{t}(L_{\cdot}^{N,h})\coloneqq\exp\big(L^{N,h}_{t}-\frac{1}{2}\langle L^{N,h}_{\cdot}\rangle_{t}\big),
\]
where $\langle L^{N,h}_{\cdot} \rangle_t$ denotes the predictable compensator of $L^{N,h}_{t}$. Applying Lemma~\ref{expp} with $\kappa=\tfrac{1}{2}$, together with Novikov’s criterion (see, e.g., \cite[Proposition~3.5.12, Corollary~3.5.13, and 3.5.14]{Karatzas1991Brownian}), we deduce that $(\mathcal{E}_{t}(L^{N,h}_{\cdot}))_{t\in[0,T]}$ is an $(\mathcal{F}_t,\mathbb{P})$-martingale as soon as $Nh\leq \mathfrak C$.
This allows us to define a new probability distribution $\mathbb{Q}$ on $(\Omega,\mathcal{F}_T)$ by
\begin{equation}\label{mesureq}
\mathbb{Q}\coloneqq\mathcal{E}_{T}(L^{N,h}_{\cdot})\cdot \mathbb{P}.
\end{equation}
By Girsanov's Theorem (see e.g. \cite[Theorem 3.5.1]{Karatzas1991Brownian}), 
we have
\begin{equation}\label{relation}
\mathbb{Q}\circ (Y_{t}^{1,h}, ..., Y_{t}^{N,h})_{t\in[0, T]}^{-1}=\mathbb{P}\circ (X_{t}^{1, h}, ..., X_{t}^{N,h})_{t\in[0, T]}^{-1}. 
\end{equation}
\textsc{Step 2.} We claim that for any subdivision $0=v_0 < v_1 < ... < v_K=T$ and for any $\mathcal{F}_T$-measurable event $A^{N}$, we have
\begin{equation}\label{ineqstep1}
\EE_{\PP}\Big[\mathbb{Q}\big( A^{N}\;\big|\; \mathcal{F}_{v_{j-1}}\big)\Big]\leq \EE_{\PP}\Big[\mathbb{Q}\big( A^{N}\;\big|\; \mathcal{F}_{v_{j}}\big)\Big]^{\frac{1}{4}} \EE_{\PP}\Big[ \exp \Big( 2 \big( \langle L_{\cdot}^{N,h}\rangle_{v_j}- \langle L_{\cdot}^{N,h}\rangle_{v_{j-1}} \big)\Big)\Big]^{\frac{1}{4}},\: 1\leq j\leq K.
\end{equation}
The proof is the same as in Step 3 of the proof of Theorem 18 in \cite{della2021nonparametric} and  is inspired from the estimate (4.2) in Theorem 2.6 in \cite{lacker2018on}. We repeat the argument: we have 
\begin{align*}\label{inq}
\EE_{\PP}&\big[\mathbb{Q}\big( A^{N}\;\big|\; \mathcal{F}_{v_{j-1}}\big)\big]=\EE_{\PP}\big[ \EE_{\QQ}\big[\mathbb{Q}\big( A^{N}\;\big|\; \mathcal{F}_{v_{j}}\big)\;\big|\; \mathcal{F}_{v_{j-1}}\big]\big]\nonumber\\
&= \EE_{\PP}\Big[ \EE_{\PP}\big[\,\frac{\,\mathcal{E}_{v_{j}}(L_{\cdot}^{N,h})\,}{\,\mathcal{E}_{v_{j-1}}(L_{\cdot}^{N,h})\,}\,\mathbb{Q}\big( A^{N}\;\big|\; \mathcal{F}_{v_{j}}\big)\;\big|\; \mathcal{F}_{v_{j-1}}\big]\Big]= \EE_{\PP}\Big[\,\frac{\,\mathcal{E}_{v_{j}}(L_{\cdot}^{N,h})\,}{\,\mathcal{E}_{v_{j-1}}(L_{\cdot}^{N,h})\,}\,\mathbb{Q}\big( A^{N}\;\big|\; \mathcal{F}_{v_{j}}\big)\Big],
\end{align*}
where the second inequality follows from Bayes's rule in \cite[Lemma 3.5.3]{Karatzas1991Brownian}. 
Next, we have
\begin{align}
\frac{\,\mathcal{E}_{v_{j}}(L_{\cdot}^{N,h})\,}{\,\mathcal{E}_{v_{j-1}}(L_{\cdot}^{N,h})\,}=\mathcal{E}_{v_j}\big(2(L_{\cdot}^{N,h}-L_{v_{j-1}}^{N,h})\big)^{\frac{1}{2}}\cdot \exp\big(\langle L_{\cdot}^{N,h}\rangle_{v_{j}}-\langle L_{\cdot}^{N,h}\rangle_{v_{j-1}}\big)^{\frac{1}{2}}. \nonumber
\end{align}
The process $\big(\mathcal{E}_{v_j}\big(2(L_{\cdot}^{N,h}-L_{v_{j-1}}^{N,h})\big)\big)_{t\in[v_{j-1}, T]}$ is a $(\mathcal F_t, \PP)$ martingale if $Nh\leq \mathfrak C$.  Hence  
$$\EE_{\PP}\big[\mathcal{E}_{v_j}\big(2(L_{\cdot}^{N,h}-L_{v_{j-1}}^{N,h})\big)\big]=1$$
for every $t \in [v_{j-1}, T]$. It follows that
\begin{align*}
\EE_{\PP}&\Big[\mathbb{Q}\big( A^{N}\;\big|\; \mathcal{F}_{v_{j-1}}\big)\Big]\nonumber\\
&=\EE_{\PP}\left[\,\mathcal{E}_{v_j}\big(2(L_{\cdot}^{N,h}-L_{v_{j-1}}^{N,h})\big)^{\frac{1}{2}}\cdot \exp\big(\langle L_{\cdot}^{N,h}\rangle_{v_{j}}-\langle L_{\cdot}^{N,h}\rangle_{v_{j-1}}\big)^{\frac{1}{2}}\,\mathbb{Q}\big( A^{N}\;\big|\; \mathcal{F}_{v_{j}}\big)\right]\nonumber\\
&\leq  \Big(\EE_{\PP}\left[\, \exp\big(\langle L_{\cdot}^{N,h}\rangle_{v_{j}}-\langle L_{\cdot}^{N,h}\rangle_{v_{j-1}}\big)\,\mathbb{Q}\big( A^{N}\;\big|\;  \mathcal{F}_{v_{j}}\big)^{2}\right]\Big)^{\frac{1}{2}}\nonumber\\
&\leq  \EE_{\PP}\left[\, \exp\Big(2\big(\langle L_{\cdot}^{N,h}\rangle_{v_{j}}-\langle L_{\cdot}^{N,h}\rangle_{v_{j-1}}\big)\Big)\right]^{\frac{1}{4}}\EE_{\PP}\left[\,\mathbb{Q}\big( A^{N}\;\big|\; \mathcal{F}_{v_{j}}\big)^{4}\right]^{\frac{1}{4}}\nonumber\\
&\leq \EE_{\PP}\left[\, \exp\Big(2\big(\langle L_{\cdot}^{N,h}\rangle_{v_{j}}-\langle L_{\cdot}^{N,h}\rangle_{v_{j-1}}\big)\Big)\right]^{\frac{1}{4}}\EE_{\PP}\left[\,\mathbb{Q}\big( A^{N}\;\big|\; \mathcal{F}_{v_{j}}\big)\right]^{\frac{1}{4}},
\end{align*} 
where the first two inequalities follows from Cauchy-Schwarz's inequality and the last inequality follows from Jensen's inequality. Thus \eqref{ineqstep1} is established.

\smallskip

\noindent\textsc{Step 3.} 
Let $A^{N}\in \mathcal{F}_{T}$. Since $\big(\mathcal{E}_{t}(L^{N,h}_{\cdot})\big)_{t\in[0, T]}$ is a $(\mathcal F_t,\PP)$ martingale,  $\PP$ and $\QQ$ coincide on $\mathcal{F}_{0}$ \big(see e.g. \cite[Section 3 - (5.5)]{Karatzas1991Brownian}\big). It follows that 
\[\QQ (A^{N})=\EE_{\QQ}\big[\,\QQ ( A^{N}\,\big|\,\mathcal{F}_0)\,\big]=\EE_{\PP}\big[\,\QQ ( A^{N}\,\big|\,\mathcal{F}_0)\,\big].\]
Now, let $m \geq 1$ and take a subdivision $0=v_0<v_1< \ldots <v_m=T$ such that
\[ \forall \,0\leq k\leq m-1, \quad v_{k+1}-v_{k}\leq \delta(2)\] where $\delta(2)$ is the constant in Lemma \ref{expp} for $\kappa=2$. It follows by \eqref{ineqstep1} that 
\begin{align}
\EE_{\PP}&\big[\,\QQ ( A^{N}\,\big|\,\mathcal{F}_0)\,\big]\leq \EE_{\PP}\big[\mathbb{Q}\big( A^{N}\;\big|\; \mathcal{F}_{T}\big)\big]^{4^{-m}}\prod_{j=1}^{m} \EE_{\PP}\Big[ \exp \Big( 2 \big( \langle L_{\cdot}^{N,h}\rangle_{v_j}- \langle L_{\cdot}^{N,h}\rangle_{v_{j-1}} \big)\Big)\Big]^{\frac{ j}{4}}\nonumber\\
&\leq \PP (A^{N})^{4^{-m}}\prod_{j=1}^{m} \EE_{\PP}\Big[ \exp \Big( 2 \big( \langle L_{\cdot}^{N,h}\rangle_{v_j}- \langle L_{\cdot}^{N,h}\rangle_{v_{j-1}} \big)\Big)\Big]^{\frac{ j}{4}}\quad\quad \text{(since $A^{N}\in \mathcal{F}_{T}$)}\nonumber\\
&\leq \PP (A^{N})^{4^{-m}} \Big(\sup_{t\in[0, T-\delta(2)]}\mathbb{E}_{\mathbb{P}}\Big[\exp\Big(2\big(\langle L_{\cdot}^{N,h}\rangle_{t+\varepsilon}-\langle L_{\cdot}^{N,h}\rangle_{t}\big)\Big)\Big]\Big)^{\frac{m(m+1)}{8}}\nonumber\\
&\leq \PP (A^{N})^{4^{-m}} C'^{\frac{m(m+1)}{8}}
\end{align}
by applying Lemma \ref{expp} with $\kappa=2$.

\smallskip

\noindent\textsc{Step 4.} We first recall the Bernstein's inequality: if $Z_1, \ldots , Z_N$ are real-valued  centered  independent random variables bounded by some constant $Q$, we have 
\begin{equation}\label{bernsteinclassical}
\forall\, \varepsilon\geq 0,\quad\quad \PP\Big( \sum_{n=1}^{N}Z_n\geq \varepsilon\Big)\leq \exp\Big(\;-\;\frac{\varepsilon^{2}}{\,2\sum_{n=1}^{N} \EE[Z_n^2]+\frac{2}{3}Q\varepsilon}\Big).
\end{equation}
Now, for $\varepsilon \geq 0$, we pick
\begin{align*}
A^{N}_{T}\coloneqq&  \Big\{ \sum_{n=1}^{N}\Big( \varphi\big(Y_{T}^{n,h}\big)-\int_{\RR^d}\varphi(y)\bar{\mu}_{T}^h(dy)\Big)\geq N\varepsilon\Big\}
\end{align*}
so that $A^{N}_{T}\in \mathcal{F}_{T}$.  Since  $(Y_T^{1,h}, \ldots , Y_T^{N,h})$ are independent with common distribution $\bar \mu_T^h$, we have
\begin{equation}
\mathbb P (A^{ N}_{T})\leq \exp\Big(\;-\;\frac{N\varepsilon^{2}}{\,2|\varphi| _{L^{2}(\bar{\mu}_T^h)}^2+\frac{2}{3}|\varphi|_{\infty}\varepsilon\,}\Big).
\end{equation}
by \eqref{bernsteinclassical}  with $Z_{n} = \varphi(Y_T^{n,h})- \int_{\R^d}\varphi(x)\bar \mu_T^h(dx)$, having $\EE[Z_{n}^2] \leq |\varphi|_{L^2(\bar \mu_T^h)}^2$ and $Q=|\varphi|_\infty$. It follows by \eqref{relation} that 
\[
\PP\Big(\int_{\RR^d}\varphi \,d(\bar{\mu}_{T}^{N,h}-\bar{\mu}_{T}^h)\geq \varepsilon \Big)=\PP\Big( \sum_{n=1}^{N}\Big( \varphi\big(\bar{X}_{T}^{n, h}\big)-\int_{\RR}\varphi(y)\bar{\mu}_{T}^h(dy)\Big)\geq N\varepsilon \Big)=\QQ (A^{N}_{T}).\]
By \textsc{Step 3}, we infer
\begin{align*}
\QQ (A^{N}_{T}) & \leq \PP (A^{N}_{T})^{4^{-m}}C'^{\frac{m(m+1)}{8}}  \leq  C'^{\frac{m(m+1)}{8}} \exp\Big(-4^{-m}\frac{N\varepsilon^{2}}{2|\varphi|_{L^{2}(\bar{\mu}_T^h)}^2+\frac{2}{3}|\varphi|_{\infty}\varepsilon\,}\Big), 
\end{align*}
and we conclude by taking $\kappa_1=C'^{\frac{m(m+1)}{8}}$ and $\kappa_2=2^{-1}4^{-m}$. 
\end{proof}

\subsubsection*{Completion of proof of Theorem \ref{thm: deviation}}
Recall the notation $K_\eta(x) = \eta^{-d}K(\eta^{-1}x)$ and set $\varphi \star \psi(x) = \int_{\R^d} \varphi(x-y)\psi(y)dy$ for the convolution product between two integrable functions $\varphi,\psi:\R^d \rightarrow \R$.
We have
\begin{align}
& \PP\big(\big|\widehat \mu_{T}^{N,h,\eta}(x)-\mu_{T}(x)\big| \geq \varepsilon \big) \nonumber \\
&\leq \PP\big(\big|\widehat \mu^{N,h,\eta}_{T}(x)-K_\eta \star \bar{\mu}^{h}_{T}(x)\big| \geq \varepsilon/3 \big) + {\bf 1}_{\{|K_\eta \star \bar \mu^h_{T}(x)-K_\eta \star \mu_{T}(x)| \geq \varepsilon/3\}}+{\bf 1}_{\{|K_\eta \star \mu_{T}(x)-\mu_{T}(x)| \geq \varepsilon/3\}} \label{eq: union bound argument}.
\end{align}
We have
$$\big|K_\eta \star \bar \mu^{h}_{T}(x)-K_\eta \star \mu_{T}(x)\big| \leq |K|_{L^1}\sup_{x \in \RD}\big|\bar \mu_{T}^h(x)-\mu_{T}(x)\big| $$
since $|K_\eta|_{L^1}  = |K|_{L^1}$ and this last term is strictly smaller than $\varepsilon/3$ for $h < h^\star\big( |K|_{L^1}^{-1}\tfrac{1}{3}\varepsilon\big)$, 
therefore the second indicator is $0$. Likewise
$$\big|K_\eta \star \mu_{T}(x)-\mu_{T}(x)\big| \leq \mathcal B_\eta(\mu_{T}, x) < \varepsilon/3$$
for $\eta < \eta^\star(\tfrac{1}{3}\varepsilon,  \mu_T)$ and the third  indicator is $0$ as well. Finally
\begin{align*}
&\PP\Big(\big|\widehat \mu^{N,h,\eta}_{T}(x)-K_\eta \star \bar \mu^{h}_{T}(x)\big| \geq \tfrac{1}{3}\varepsilon \Big) \nonumber\\
&=\PP\Big(\big|\int_{\R^d} K_\eta(x-\cdot)d(\bar \mu^{N,h}_{T}-\bar \mu^h_{T})\big| \geq \tfrac{1}{3}\varepsilon \Big)
\leq 2\kappa_1 \exp\Big(-\frac{\kappa_2N(\tfrac{1}{3}\varepsilon)^2}{|K_\eta(x-\cdot)|^2_{L^2(\mu_{T})}+|K_\eta|_\infty\varepsilon}\Big)  
\end{align*}
by Proposition \ref{prop: bernstein}. Theorem \ref{thm: deviation} then follows, where we reuse $\kappa_2$ (instead of $\kappa_2/9$) for simplicity, as constants may be redefined without loss of generality. 

\subsubsection*{Proof of Lemma \ref{expp}}

Writing  $\widetilde \mu^{N,h}_s = \tfrac{1}{N}\sum_{n=1}^{N}\,\delta_{Y^{n,h}_{s}}$, for $\kappa >0$, we have
\begin{align} 
& \kappa \big(\langle L_{\cdot}^{N,h}\rangle_{t+\delta}-\langle L_{\cdot}^{N,h}\rangle_{t}\big) \nonumber\\
& = \kappa \sum_{n = 1}^N \int_{t}^{t+\delta}\big(b(\underline{s}, Y_{\underline{s}}^{n,h},\widetilde \mu^{N,h}_{\underline{s}})-b(\underline{s}, Y_{\underline{s}}^{n,h}, \mu_{\underline{s}})\big)^\top c(\underline{s}, Y_{\underline{s}}^{n,h})^{-1} \big(b(\underline{s}, Y_{\underline{s}}^{n,h}, \widetilde \mu^{N,h}_{\underline{s}})-b(\underline{s}, Y_{\underline{s}}^{n,h}, \mu_{\underline{s}})\big)ds \nonumber\\
& \leq \kappa \sup_{t\in [0,T]}|\mathrm{Tr}(c(t,\cdot)^{-1})|_\infty \sum_{n = 1}^N\int_{t}^{t+\delta}\big|b(\underline{s}, Y_{\underline{s}}^{n,h},\widetilde \mu^{N,h}_{\underline{s}})-b(\underline{s}, Y_{\underline{s}}^{n,h}, \mu_{\underline{s}})\big|^2ds \nonumber\\
& \leq  2\kappa \sup_{t\in [0,T]}|\mathrm{Tr}(c(t,\cdot)^{-1})|_\infty  \Big( \sum_{n = 1}^N\int_{t}^{t+\delta}\big|b(\underline{s}, Y_{\underline{s}}^{n,h},\widetilde \mu^{N,h}_{\underline{s}})-b(\underline{s}, Y_{\underline{s}}^{n,h}, \bar \mu_{\underline{s}}^h)\big|^2ds \nonumber\\
&\hspace{6cm}+N |\widetilde b|_{\mathrm{Lip}}^2\int_{t}^{t+\delta} \mathcal W_1(\bar \mu_{\underline{s}}^h, \mu_{\underline{s}})^2 ds\Big),  \label{eq: key estimate}
\end{align}
by the uniform ellipticity of $c$ thanks to Assumption \ref{ass: AII} and by triangle inequality for the last inequality, with $|\widetilde b(t,x,y)- \widetilde b(t,x,y')\big| \leq  |\widetilde b|_{\mathrm{Lip}}|y-y'|$ in particular.  Recall that under Assumption \ref{ass: AI}, \ref{ass: AII} and \ref{ass: AIII 1}, there exists a unique strong solution $X=(X_t)_{t\in[0,T]}$ of the McKean-Vlasov equation \eqref{mckeaneq} satisfying 
\[\forall\,s,t\in[0,T], s<t,\qquad \E\big[|X_t - X_s|^2\big]^{1/2}\leq  \Cmodel \sqrt{t-s},\]
see \cite[Proposition 2.1]{liu2020functional}. 
Hence, the function $(t,x)\mapsto b^{\,\mu}(t, x)\coloneqq b(t, x, \mu_t)$ with $\mu_t=\mathcal{L}(X_t)$ is $\frac{1}{2}$-H\"older continuous in $t$ and Lipschitz continuous in $x$. Thus, one can obtain
$$\sup_{s \in [0,T]}\mathcal W_1(\bar \mu_{\underline{s}}^h, \mu_{\underline{s}}) \leq  \Cmodel\, h^{1/2},$$
by rewriting \eqref{mckeaneq} as a Brownian diffusion \[dX_t=b^{\,\mu}(t, X_t)dt+\sigma(t,X_t)dB_t\] and by applying the classical convergence result of the Euler scheme for a Brownian diffusion (see e.g. \cite[Theorem 7.2]{pages2018numerical}) since for every $s\in[0,T]$, $\mathcal W_1(\bar \mu_{s}^h, \mu_{s})\leq \E\big[|X_s- \bar{X}_s^h|\big]$. 
Writing
\begin{align*}
b(\underline{s}, Y_{\underline{s}}^{n,h},\widetilde \mu^{N,h}_{\underline{s}})-b(\underline{s}, Y_{\underline{s}}^{n,h}, \bar \mu_{\underline{s}}^h) & = N^{-1}\sum_{n'=1}^N\xi_s^{n,n'} = N^{-1}\xi_s^{n,n}+\tfrac{N-1}{N} S_s^{n,N},
\end{align*} 
with $S_s^{n,N} = \tfrac{1}{N-1}\sum_{n' \neq n}\xi_s^{n,n'}$ and 
$$\xi_s^{n,n'} = \widetilde b(\underline{s}, Y_{\underline{s}}^{n,h},Y_{\underline{s}}^{n',h})-\int_{\R^d}\widetilde b(\underline{s}, Y_{\underline{s}}^{n,h}, y)\bar \mu_{\underline{s}}^h(dy).$$ 
The random variables $(\xi_s^{n,n'})_{1 \leq n' \leq N, n'\neq n}$ are centered, identically distributed and conditionally independent given $Y_{\underline{s}}^{n,h}$. Moreover, for every integer $m \geq 1$, we have the estimate
\begin{align}\label{eq:moment-Y-2m}
\sup_{s \in [0,T]}\E\big[|Y_{\underline{s}}^{n,h}|^{2m}\big] \leq  \Cmodel^m m!.
\end{align}

The proof  of \eqref{eq:moment-Y-2m}  is standard (see for instance, \cite{meleard1996asymptotic, sznitman1991topics} or \cite{della2021nonparametric} for a control of growth in the constant $m$ and we omit it).
From \eqref{eq:moment-Y-2m}, we infer, for every $m \geq 1$:
\begin{align*}
&\E\Big[\big|\xi_s^{n,n'}\big|^{2m}\,|\,Y_{\underline{s}}^{n,h}\Big] = \EE \Big[\; \big| \widetilde{b}(\underline{s}, Y_{\underline{s}}^{n, h}, Y_{\underline{s}}^{n', h})- \int_{\RD} \widetilde{b}(\underline{s}, Y_{\underline{s}}^{n, h}, y)\bar{\mu}_{\underline{s}}^{h}(dy)\big|^{2m}\;\Big|\; Y_{\underline{s}}^{n,h}\; \Big]\nonumber\\
&\quad\leq \EE \Big[\; 2^{2m-1}\Big(\big| \widetilde{b}(\underline{s}, Y_{\underline{s}}^{n, h}, Y_{\underline{s}}^{n', h})- \widetilde{b}(\underline{s}, Y_{\underline{s}}^{n, h}, 0)\big|^{2m} \nonumber\\
&\quad\qquad +\big|\widetilde{b}(\underline{s}, Y_{\underline{s}}^{n, h}, 0)-\int_{\RD} \widetilde{b}(\underline{s}, Y_{\underline{s}}^{n, h}, y)\bar{\mu}_{\underline{s}}^{h}(dy)\big|^{2m}\Big)\;\Big|\; Y_{\underline{s}}^{n,h}\; \Big]\nonumber\\
&\quad \leq \EE\Big[\; 2^{2m-1}\Big(\; |\widetilde{b}|_{\mathrm{Lip}}^{2m} \,|Y_{\underline{s}}^{n', h}|^{2m}+ |\widetilde{b}|_{\mathrm{Lip}}^{2m}\, \EE\big[  |Y_{\underline{s}}^{n', h}|^{2m}\big] \;\Big)\;\Big|\; Y_{\underline{s}}^{n,h}\; \Big] =2^{2m}\,|\widetilde{b}|_{\mathrm{Lip}}^{2m} \,\EE\big[  |Y_{\underline{s}}^{n', h}|^{2m}\big] \nonumber\\
&\quad \leq \big(4 |\widetilde{b}|_{\mathrm{Lip}}^{2}  \Cmodel \big)^m  m!
\end{align*} 
and in turn,  we derive, for every $v \in \R$,  
\[ \E\big[\exp(v(\xi_s^{n,n'})_{[k]})\,|\,Y_{\underline{s}}^{n,h}\big] \leq \exp\big(\tfrac{ \Cmodel  v^2}{2}\big),\;\;k=1,\ldots,d.\]
In other words, conditional on $Y_{\underline{s}}^{n,h}$, each component $(\xi_s^{n,n'})_{[k]}$  of $\xi_s^{n,n'} = \big((\xi_s^{n,n'})_{[1]}, \ldots, (\xi_s^{n,n'})_{[d]}\big)$ is sub-Gaussian with variance proxy  depending only on the model constant $\Cmodel$, which we continue to denote by $\Cmodel$ in the subsequent computations   (see e.g. \cite{MR598605} and \cite[Theorem 2.1.1]{pauwels2020lecture}). Consequently, conditional on $Y_{\underline{s}}^{n,h}$,  each component  $(S_s^{n,N})_{[k]}$  of $S_s^{n,N}$ is sub-Gaussian with variance proxy $(N-1)^{-1}  \Cmodel $, by the conditional independence of the random variables $(\xi_s^{n,n'})_{1 \leq n' \leq N, n'\neq n}$.
This implies in particular
$$
\E\Big[\exp\Big(\frac{(N-1)(S_s^{n,N})_{[k]}^2}{8 \Cmodel}\Big)\Big]  =\E\Big[\,\E\Big[\exp\Big(\frac{(N-1)(S_s^{n,N})_{[k]}^2}{8 \Cmodel}\Big)\;\Big|\; Y_{\underline{s}}^{n,h}\Big]\,\Big] \leq 2,
$$
and in turn
\begin{equation}  \label{eq: the subgaussian ineq}
\E\Big[\exp\Big(\frac{(N-1)|S_s^{n,N}|^2}{8d \Cmodel}\Big)\Big] \leq d^{-1}\sum_{k = 1}^d\E\Big[\exp\Big(\frac{(N-1)(S_s^{n,N})_{[k]}^2}{8 \Cmodel}\Big)\Big] \leq 2.
\end{equation}
Likewise
\begin{equation}  \label{eq: the subgaussian ineq bis}
\E\Big[\exp\Big(\frac{|\xi_s^{n,n}|^2}{8d \Cmodel}\Big)\Big] \leq 2.
\end{equation}
Abbreviating $\tau = 2\kappa  \sup_{t\in [0,T]}|\mathrm{Tr}(c(t,\cdot)^{-1})|_\infty $, it follows that
\begin{align*}
&\E\Big[\exp\big(\kappa (\langle L_{\cdot}^{N,h}\rangle_{t+\delta}-\langle L_{\cdot}^{N,h}\rangle_{t})\big)\Big] \\
& \leq e^{\tau|\widetilde b|_{\mathrm{Lip}}^2 \Cmodel^2\delta Nh}
 \E\Big[\exp\Big(\tau\int_{t}^{t+\delta}\sum_{n = 1}^N\big|N^{-1}\sum_{n'=1}^N \xi_s^{n,n'}\big|^2ds\Big)\Big] \\
 & \leq  e^{\tau|\widetilde b|_{\mathrm{Lip}}^2 \Cmodel^2\delta Nh} \tfrac{1}{2\delta}\int_{t}^{t+\delta}N^{-1}\sum_{n = 1}^N\Big(\E\Big[\exp\big(4\delta\tau N\big|N^{-1}\xi_s^{n,n}\big|^2\big)\Big]+\E\Big[\exp\big(4\tau \delta N\big|\tfrac{N-1}{N}S_s^{n,N}\big|^2\big)\Big]\Big)ds \\
& \leq 2 e^{\tau|\widetilde b|_{\mathrm{Lip}}^2 \Cmodel^2\delta Nh} 
\end{align*}
as soon as $\delta \leq (32\tau d\Cmodel )^{-1} \tfrac{N}{N-1} = (64 \kappa \sup_{t\in [0,T]}|\mathrm{Tr}(c(t,\cdot)^{-1})|_\infty  d\Cmodel )^{-1} \tfrac{N}{N-1}$ by \eqref{eq: the subgaussian ineq} and \eqref{eq: the subgaussian ineq bis}. We obtain Lemma \ref{expp} with  a constant $C'$ depending only on $\Cmodel$ and $\mathfrak{C}$.  

\subsection{Proof of Theorem \ref{thm: error 1}}

The proof of Theorem~\ref{thm: error 1} relies on a crucial approximation result, stated in Proposition~\ref{densityeuler} below, concerning the density of a diffusion process and that of its Euler scheme counterpart. The proof of Proposition~\ref{densityeuler} itself is based on the sharp results of Gobet and Labart \cite{Gobet2008sharp}.

\begin{prop}\label{densityeuler}
Work under Assumptions \ref{ass: AI}, \ref{ass: AII} and \ref{ass: AIII 1}.  Let $x \mapsto \bar \mu^h_t(x)$ be the density function of $\bar X^h_t$ defined by \eqref{eq: euler abstract}.  For any $0 < t_{\min} \leq T$, we have
\begin{equation}\label{deneuler}
\sup_{(t, x)\in[t_{\min}, T]\times \RD}\big| \bar \mu^h_t(x)-\mu_{t}(x)\big|\leq   C_{t_{\min}, \mathfrak{M}}\cdot  h,
\end{equation}
for some constant $C_{t_{\min}, \mathfrak{M}}$ depending only on $t_{\min}$ and $\Cmodel$. 
\end{prop}
\begin{proof}[Proof of Proposition \ref{densityeuler}]

For $x_0 \in \R^d$, let $(\xi_t^{x_0})_{t\in[0, T]}$ be a diffusion process of the form
\begin{equation}\label{diffusion}
\xi_t^{x_0}=x_0+\int_0^t\tilde{b}(s, \xi_s^{x_0})ds+\int_0^t\tilde{\sigma}(s, \xi_s^{x_0})dB_s,
\end{equation}
where $\widetilde \sigma: [0,T]\times \R^d \rightarrow \R^d \otimes \R^d$ and $\widetilde b:[0,T] \times \R^d \rightarrow \R^d$ are both $\mathcal C^{1,3}_b$ and $\widetilde \sigma$ is uniformly  elliptic  and $\partial_t \widetilde{\sigma}$ is  $\mathcal C_{b}^{0,1}$. We associate its companion
Euler scheme $(\bar{\xi}_{t}^{x_0,h})_{t\in[0, T]}$:
\begin{align*} \label{diffeuler}
\begin{cases}
\bar{\xi}_{t_{m+1}}^{x_0,h}=\bar \xi_{t_m}^{x_0,h}+h\cdot \tilde{b}(t_m, \bar{\xi}^{x_0, h}_{t_{m}})+\sqrt{h}\,\tilde{\sigma}(t_m, \bar{\xi}_{t_{m}}^{x_0,h})Z_{m+1}^n,\nonumber\\
Z_{m+1}^{n}\coloneqq \frac{1}{\sqrt{h}}\big( B_{t_{m+1}}- B_{t_{m}}\big),\noindent\\
\bar{\xi}_{t_{0}}^{x_0,h} = x_0,\\
\forall \, t\in[t_{m}, t_{m+1}), \quad \bar{\xi}_{t}^{x_0,h}=\bar{\xi}_{t_{m}}^{x_0,h}+(t-t_m) \tilde{b}(t_m, \bar{\xi}_{t_{m}}^{x_0,h})+\tilde{\sigma}(t_m, \bar{\xi}_{t_{m}}^{x_0,h})(B_t-B_{t_{m}}).
\end{cases}
\end{align*}
For $t>0$, both  $\mathcal L(\xi_t^{x_0})$ and $\mathcal L(\bar \xi_t^{x_0,h})$ are absolutely continuous, with density $p_t(x_0,x)$ and $\bar p_t^h(x_0,x)$ w.r.t. the Lebesgue measure $dx$ on $\R^d$. By 
Theorem 2.3 in \cite{Gobet2008sharp}, there exist two constants $c_1$ and $c_2$ depending on $T$, $d$ and the data $(\widetilde \sigma, \widetilde b)$ such that 
\begin{equation} \label{eq: gobetlabart}
\big|\bar{p}^h_{t}(x_0,x)-p_{t}(x_0,x)\big|\leq c_1 h \,t^{-\frac{d+1}{2}}\exp\Big(-\frac{c_2|x-x_0|^{2}}{t}\Big).
\end{equation}
Taking $\widetilde \sigma(t,x) = \sigma(t,x)$ and $\widetilde b(t,x) = b(t,x,\mu_t)$, we can identify  $\bar \mu^h_{t}(x)$  with $\int_{\R^d}\bar p_{t}^h(x_0,x)\mu_0(dx_0)$ and $\mu_t(x)$ with $\int_{\R^d}p_{t}(x_0,x)\mu_0(dx_0)$. 
For every $t \in [t_{\min}, T]$, we have 
\begin{align*}
\big|\bar \mu_t^h(x)-\mu_t(x)\big|  &  \leq \int_{\R^d}\big|\bar{p}_{t}^h(x_0,x)-p_t(x_0,x)\big|\mu_0(dx_0)\\
& \leq c_1 h\int_{\R^d} \,t^{-\frac{d+1}{2}}\exp\Big(-\frac{c_2|x-x_0|^{2}}{t}\Big)\mu_0(dx_0)\\
&\leq c_1t_{\min}^{-\frac{d+1}{2}}\cdot h =:  C_{t_{\min}, \mathfrak{M}}  \cdot h,
\end{align*}
by Theorem 2.3 in \cite{Gobet2008sharp} since Assumption \ref{ass: AIII 1} implies $(t,x) \mapsto b(t,x,\mu_t)$ is $\mathcal C_b^{1,3}$. More specifically,  
for $(X_t)_{t \in [0,T]}$ solving \eqref{mckeaneq}, we have
\[b(t,x,\mu_t) = \E\big[\,\widetilde b(t,x,X_t)\big].\]
By dominated convergence,  $x \mapsto b(t,x,\mu_t)$ is $\mathcal C^3_b$ thanks to the regularity assumptions on $x \mapsto \widetilde b(t,x,y)$.
For the regularity in time, by It\^o's formula
\begin{align*}
 \E\big[\,\widetilde b(t,\cdot,X_t)\big] & =  \E\big[\,\widetilde b(0,\cdot,X_0)\big] + \int_0^t \E\big[\partial_{1}\widetilde b(s,\cdot,X_s)\big]ds+\int_0^t\E\big[\mathcal A_s\widetilde b(s,\cdot,X_s)\big]ds,
\end{align*}
where $(\mathcal A_s)_{s \in [0,T]}$ is the family of generators defined in \eqref{eq: generator} applied to 
$$ y \mapsto \widetilde b(s,\cdot, y) = (\widetilde b(s,\cdot,y )_{[1]},\ldots, \widetilde b(s,\cdot,y)_{[d]})$$
componentwise. Moreover, for each component $1 \leq k \leq d$, 
\begin{align*}
\mathcal A_s\widetilde b(s,\cdot, X_s)_{[k]} & =  \E\big[\,\widetilde b(s,\cdot,X_s)]^{\top}\nabla_{y} \widetilde b(s,\cdot,X_s)_{[k]}+\tfrac{1}{2}\sum_{l,l'=1}^d c_{ll'}(s,X_s)\partial_{ y_ly_{l '}} \widetilde b(s,\cdot, X_s)_{[k]},
\end{align*}
hence
$$\E\big[\mathcal A_s\widetilde b(s,\cdot,X_s)_{[k]}\big] =\E\big[\,\widetilde b(s,\cdot,X_s)]^{\top}\E\big[F_k(s,X_s)\big]+\E\big[G_k(s,X_s)\big],$$
with
$$F_k(s,y) = \nabla_{y} \widetilde b(s,\cdot, y)_{[k]},\;\;\text{and}\;\;G_k(s,y) = \tfrac{1}{2}\sum_{l,l'=1}^d c_{ll'}(s,y)\partial_{y_ly_{l '}} \widetilde b(s,\cdot,y)_{[k]}.$$
All three functions $(s,y) \mapsto \widetilde b(s,\cdot,y)$, $(s,y)\mapsto F_k(s,y)$ and $(s,y) \mapsto G_k(s,y)$ are continuous and bounded by Assumption \ref{ass: AIII 1}, and so are 
$s \mapsto  \E\big[\,\widetilde b(s,\cdot,X_s)]$, $s \mapsto \E\big[F_k(s,X_s)\big]$ and $s \mapsto \E\big[G_k(s,X_s)\big]$ by dominated convergence using in particular that $s \mapsto X_s$ is stochastically continuous. Hence $s \mapsto \E\big[\mathcal A_s\widetilde b(s,\cdot,X_s)\big]$ is continuous and $s \mapsto \E\big[\partial_{s}\widetilde b(s,\cdot,X_s)\big]$ too, using again Assumption \ref{ass: AIII 1}. It follows that $t \mapsto b(t,x,\mu_t)$ is $\mathcal C_b^1$ on $[0,T]$. 
\end{proof}

\subsubsection*{Proof of Theorem \ref{thm: error 1}}

We have 
\begin{align*}
\E\big[\big|\widehat \mu^{N,h,\eta}_{T}(x)-\mu_{T}(x)\big|^p\big] & \leq 3^{p-1}\Big(\E\big[\big|\widehat \mu^{N,h,\eta}_{T}(x)-K_\eta \star \bar \mu^{h}_{T}(x)\big|^p\big]\\
&\hspace{3mm}+ \big|K_\eta \star \bar \mu^h_{T} (x)-K_\eta \star \mu_{T}(x)\big|^p
+\big|K_\eta \star \mu_{T}(x)-\mu_{T}(x)\big|^p\Big),
\end{align*}
and we plan to bound each term separately.
First, by Proposition \ref{prop: bernstein}
\begin{align*}
\mathbb E\big[\big|\widehat \mu^{N,h,\eta}_{T}(x)-K_\eta \star \bar \mu^{h}_{T}(x)\big|^p\big] & = \int_0^\infty \PP\big(\big|\widehat \mu^{N,h,\eta}_{T}(x)-K_\eta \star \bar \mu^{h}_{T}(x)\big| \geq z^{1/p}\big)\,dz \\
& \leq 2\kappa_1 \int_0^\infty \exp\Big(-\frac{\kappa_2Nz^{2/p}}{|K_\eta(x-\cdot)|^2_{L^2(\mu_{T})}+|K_\eta|_\infty z^{1/p}}\Big)\,dz \\
& \leq 2\kappa_1 \int_0^\infty \exp\Big(-\frac{\kappa_2N\eta^{d}z^{2/p}}{\sup_{y\in \mathrm{Supp}(K)}\mu_{T}(x-y)|K|^2_{L^2}+|K|_\infty z^{1/p}}\Big)\,dz \\
& \leq 2\kappa_1c_p \kappa_2^{-p/2}\sup_{y\in \mathrm{Supp}(K)}\mu_{T}(x-y)^{p/2}|K|^p_{L^2} (N\eta^{d})^{-p/2},
\end{align*}
stemming from the estimate
$$\int_0^\infty \exp\Big(-\frac{az^{2/p}}{b+cz^{1/p}}\Big)dz \leq c_p \max\Big(\Big(\frac{a}{b}\Big)^{-p/2}, \Big(\frac{a}{c}\Big)^{-p}\Big),$$
valid for $a,b,c,p >0$, with $c_p = 2\int_0^\infty \exp\big(-\tfrac{1}{2}(\min(z, \sqrt{z}))^{2/p}\big)dz$.
Next, 
\begin{align*}
\big|K_\eta \star \bar \mu^h_{T}(x)-K_\eta \star \mu_{T}(x)\big|^p & \leq \sup_{y \in \R^d}\big|\bar \mu_{T}^h(y)-\mu_{T}(y)\big|^p \big(\int_{\R^d}\big|K_\eta(x-y)\big|dy\big)^p \leq   \mathfrak{M}^p |K|_{L^1}^p h^p
\end{align*}
by Proposition \ref{densityeuler}. Finally, by definition 
$$\big|K_\eta \star \mu_{T}(x)-\mu_{T}(x)\big|^p \leq \mathcal B_{\eta}(\mu_{T}, x)^p,$$
and we obtain Theorem \ref{thm: error 1} with 
\begin{equation*}
\kappa_3 = 3^{p-1}\max\big(2\kappa_1c_p \kappa_2^{-p/2}\sup_{x\in\mathcal{K}, y\in \mathrm{Supp}(K)}\mu_{T}(x-y)^{p/2}|K|^p_{L^2}, \mathfrak{M}^p |K|_{L^1}^p, 1\big).\end{equation*}

\subsubsection*{Proof of Corollary \ref{cor: error}}
By Taylor's formula, we have, for $x,y \in \R^d$, $\eta >0$ and any $1 \leq k' \leq k$:
\begin{align}
\mu_{T}(x+\eta y)-\mu_{T}(x) & = \sum_{1 \leq |\alpha| \leq k'-1}\frac{\partial^\alpha \mu_{T}(x)}{\alpha !}(\eta y)^\alpha+k'\sum_{|\alpha|=k'}\frac{(\eta y)^\alpha}{\alpha !}\int_0^1(1-t)^{k'-1}
\partial^\alpha \mu_{T}(x+t\eta y)dt, \label{eq: taylor expa}
\end{align}
with multi-index notation $\alpha = (\alpha_1,\ldots, \alpha_d)$, $\alpha_i \in \{0,1,\ldots \}$, $\alpha ! = \alpha_1 ! \alpha_2 ! \cdots \alpha_d !$, $|\alpha| = \alpha_1+\alpha_2+\cdots +\alpha_d$, and for $x = (x_1,\ldots, x_d)$:
$$x^\alpha = x_1^{\alpha_1}  x_2^{\alpha_2}\cdots x_d^{\alpha_d},\;\;\text{and}\;\;\partial^\alpha f = \frac{\partial^{|\alpha|} f}{\partial^{\alpha_1} x_1\partial^{\alpha_2} x_2\cdots \partial^{\alpha_d} x_d}.$$
It follows that
\begin{align*}
& K_\eta \star \mu_{T}(x)-\mu_{T}(x) \\
& = \eta^{-d} \int_{\R^d}K\big(\eta^{-1}(x-y)\big)\big(\mu_{T}(y)-\mu_{T}(x)\big)dy \\
& = \int_{\R^d}K(-y)\big(\mu_{T}(x+\eta y)-\mu_{T}(x)\big)dy \\
& = \int_{\R^d}K(-y)\big(\min(k, \ell+1)\sum_{|\alpha|=\min(k,\ell+1)}\frac{(\eta y)^\alpha}{\alpha !}\int_0^1(1-t)^{\min(k, \ell+1)-1}
\partial^\alpha \mu_{T}(x+t\eta y)dt\big)dy,
\end{align*}
thanks to \eqref{eq: taylor expa} with $k' = \min(k,\ell+1)$ and the cancellation property \eqref{eq: cancellation} of the kernel $K$ that eliminates the polynomial term in the Taylor's expansion, hence
$$\big|K_\eta \star \mu_{T}(x)-\mu_{T}(x)\big| \leq  |\mu_{T}|_{\min(k, \ell+1),K} \eta^{\min(k, \ell+1)},$$
with 
\begin{equation} \label{eq: def semi norm}
|f|_{\min(k, \ell+1),K} = \sum_{|\alpha| = \min(k, \ell+1)} \frac{|\partial^\alpha f|_\infty}{\alpha !} \int_{\R^d}|y|^\alpha |K(y)|dy.
\end{equation} 
It follows that 
\begin{equation} \label{eq: bias multivariate}
\mathcal B_\eta(\mu_{T}, x)^p \leq |\mu_{T}|_{\min(k, \ell+1),K}^p \eta^{\min(k, \ell+1)p}.
\end{equation}
Applying now Theorem \ref{thm: error 1}, we obtain
\begin{align*}
\mathbb E\Big[\big|\widehat \mu_{T}^{N,h,\eta}(x)-\mu_{T}(x)\big|^p\Big] &  \leq \kappa_3 \big(|\mu_{T}|_{\min(k, \ell+1),K}^p \eta^{\min(k, \ell+1)p}+(N\eta^d)^{-p/2}+h^p\big) \\
& \leq \kappa_3 \big(|\mu_{T}|_{\min(k, \ell+1),K}^p+1)N^{-\min(k, \ell+1)p/(2\min(k, \ell+1)+1)}+h^p\big)
\end{align*}
with the choice $\eta = N^{-1/(2\min(k, \ell+1)+d)}$
hence the corollary with $\kappa_4 = \kappa_3(1+|\mu_{T}|_{\min(k, \ell+1),K}^p)$.

\subsection{Proof of Theorem \ref{thm: oracle}}

This is an adaptation of the classical Goldenshluger-Lepski method \cite{GL08, GL11, GL14}. We repeat the main arguments and highlight the differences due to the stochastic approximation and the presence of the Euler scheme. \\

Abbreviating $\mathsf A_{\eta}^{N}(T,x)$ by $\mathsf A_{\eta}^{N}$, we have, for every $\eta \in \mathcal H$,
\begin{align*}
&\mathbb E\Big[\big(\widehat \mu_T^{N,h, \widehat \eta^N(T,x) }(x)-\mu_{T}(x)\big)^2\Big] \\
& \leq 2\mathbb E\Big[\big(\widehat \mu_T^{N,h, \widehat \eta^N(T,x) }(x)-\widehat \mu^{N,h,\eta}_{T}(x)\big)^2\Big] + 2\mathbb E\big[\big(\widehat \mu^{N,h,\eta}_{T}(x)-\mu_{T}(x)\big)^2\big] \\
& \leq 2\mathbb E\Big[\Big\{\big(\widehat \mu_T^{N,h, \widehat \eta^N(T,x) }(x)-\widehat \mu^{N,h,\eta}_{T}(x)\big)^2-\mathsf V_{\eta}^N -\mathsf V^N_{\widehat \eta^N(T,x)} \Big\}_++ \mathsf V_{\eta}^N +\mathsf V^N_{\widehat \eta^N(T,x)}\Big] \\
&\hspace{2mm}+ 2\mathbb E\big[\big(\widehat \mu^{N,h,\eta}_{T}(x)-\mu_{T}(x)\big)^2\big] \\
&  \leq 2\mathbb E\Big[\mathsf{A}^N_{\max(\eta, \widehat \eta^N(T,x))}+ \mathsf V_{\eta}^N +\mathsf V^N_{\widehat \eta^N(T,x)}\Big] + 2\mathbb E\big[\big(\widehat \mu^{N,h,\eta}_{T}(x)-\mu_{T}(x)\big)^2\big] \\
&  \leq 2\big(\mathbb E\big[\mathsf{A}^N_{\eta}\big] +\mathsf V_{\eta}^N\big) +2\E\big[\mathsf{A}^N_{\widehat \eta^N(T,x)}+\mathsf V_{\widehat \eta^N(T,x)}^N\big] + 2\mathbb E\big[\big(\widehat \mu^{N,h,\eta}_{T}(x)-\mu_{T}(x)\big)^2\big] \\
& \leq 4\E\big[\mathsf{A}^N_{\eta}+\mathsf V_{\eta}^N\big] + \kappa_3(\mathcal B_\eta(x,\mu_{T})^2+(N\eta^d)^{-1}+h^2) \\
& \leq  \kappa_7 \big(\E\big[\mathsf{A}^N_{\eta}\big]+\mathsf V_{\eta}^N+\mathcal B_\eta(x,\mu_{T})^2+h^2\big)
\end{align*}
with  $ \kappa_7  = 2\max(2+\frac{\kappa_3}{\varpi |K|_{L^2}^2}, \kappa_3)$ 
by definition of $\widehat \eta^N(T,x)$ in \eqref{eq: def minimisation}, of $\mathsf V_{\eta}^N$ in \eqref{eq: def variance lepski} and using Theorem \ref{thm: error 1} with $p=2$ to bound the last term.\\

We next bound the term $\E\big[\mathsf{A}^N_{\eta}\big]$. For $\eta,\eta'\in \mathcal H$, with $\eta'\leq \eta$, we start with the decomposition
\begin{align*}
& \widehat \mu^{N,h,\eta}_{T}(x)-\widehat \mu^{N,h,\eta'}_{T}(x) \\
& = \big(\widehat \mu^{N,h,\eta}_{T}(x)-K_\eta \star \overline \mu^h_{T}(x)\big)+ \big(K_\eta \star \overline \mu^h_{T}(x)-K_\eta \star \mu_{T}(x)\big)+\big(K_\eta \star \mu_{T}(x)-\mu_{T}(x)\big)\\
& -\big(\widehat \mu^{N,h,\eta'}_{T}(x)-K_{\eta'} \star \overline \mu^h_{T}(x)\big)- \big(K_{\eta'} \star \overline \mu^h_{T}(x)-K_{\eta'} \star \mu_{T}(x)\big)-\big(K_{\eta'} \star \mu_{T}(x)-\mu_{T}(x)\big),
\end{align*}
and thus, by Proposition \ref{densityeuler},
we infer
\begin{align*}
\big|\widehat \mu^{N,h,\eta}_{T}(x)-\widehat \mu^{N,h,\eta'}_{T}(x)\big| &\leq \big|\widehat \mu^{N,h,\eta}_{T}(x)-K_\eta \star \overline \mu^h_{T}(x)\big|+\big|\widehat \mu^{N,h,\eta'}_{T}(x)-K_{\eta'} \star \overline \mu^h_{T}(x)\big| \\
&\hspace{2mm} +2  \Cmodel |K|_{L^1}h+2\,\mathcal B_{\eta}(\mu_{T},x)
\end{align*}
using $\eta' \leq \eta$ to bound the second bias term.
\begin{align*}
 \big(\widehat \mu^{N,h,\eta}_{T}(x)-\widehat \mu^{N,h,\eta'}_{T}(x)\big)^2- \mathsf{V}_\eta^N-\mathsf{V}_{\eta'}^N & \leq 3\big(\widehat \mu^{N,h,\eta}_{T}(x)-K_\eta \star \overline \mu^h_{T}(x)\big)^2-\mathsf{V}_{\eta}^N \\
&+3\big(\widehat \mu^{N,h,\eta'}_{T}(x)-K_{\eta'} \star \overline \mu^h_{T}(x)\big)^2-\mathsf{V}_{\eta'}^N\\
&+3\big(2  \Cmodel |K|_{L^1}h+2\mathcal B_\eta(x,\mu_{T})\big)^2.
\end{align*}
Taking maximum for $\eta' \leq \eta$, we thus obtain
\begin{align*}
\max_{\eta' \leq \eta}\big\{\big(\widehat \mu^{N,h,\eta}_{T}(x)-\widehat \mu^{N,h,\eta'}_{T}(x)\big)^2- \mathsf{V}_\eta^N-\mathsf{V}_{\eta'}^N \big\} & \leq   \big\{3(\widehat \mu^{N,h,\eta}_{T}(x)-K_\eta \star \mu_{T}(x))^2-\mathsf{V}_{\eta}^N\big\}_+ \\
&\hspace{3mm}+ \max_{\eta' \leq \eta} \big\{3(\widehat \mu^{N,h,\eta'}_{T}(x)-K_\eta \star \mu_{T}(x))^2-\mathsf{V}_{\eta'}^N\big\}_+\\
&\hspace{2mm}+24\big(  \Cmodel^2|K|_{L^1}^2h^2+\mathcal B_\eta(x,\mu_{T})^2\big).
\end{align*}
We finally bound the expectation of each term. First, we have, by Proposition \ref{prop: bernstein}
\begin{align*}
&\E\big[ \big\{3\big(\widehat \mu^{N,h,\eta}_{T}(x)-K_\eta \star \overline \mu^h_{T}(x)\big)^2-\mathsf{V}_{\eta}^N\big\}_+\big] \\
& = \int_0^\infty \PP\big(3\big(\widehat \mu^{N,h,\eta}_{T}(x)-K_\eta \star \overline \mu^h_{T}(x)\big)^2-\mathsf{V}_{\eta}^N\geq z\big)dz\\
& = \int_0^\infty \PP\big(\big|\widehat \mu^{N,h,\eta}_{T}(x)-K_\eta \star \overline \mu^h_{T}(x)\big| \geq 3^{-1/2}(z+\mathsf{V}_{\eta}^N)^{1/2}\big)dz\\
& \leq 2\kappa_1 \int_{V_\eta^N}^\infty \exp\Big(-\frac{\kappa_2N\eta^{d}\tfrac{1}{3}z}{\sup_{y\in \mathrm{Supp}(K)}\mu_{T}(x-y)|K|^2_{L^2}+|K|_\infty 3^{-1/2}z^{1/2}}\Big)\,dz \\
& \leq 2\kappa_1 \int_{V_\eta^N}^\infty \exp\Big(-\frac{\kappa_2N\eta^{d}z}{6\sup_{y\in \mathrm{Supp}(K)}\mu_{T}(x-y)|K|^2_{L^2}}\Big)\,dz+ 2\kappa_1 \int_{V_\eta^N}^\infty \exp\Big(-\frac{\kappa_2N\eta^{d}z^{1/2}}{2\sqrt{3}|K|_\infty}\Big)\,dz \\
& \leq \tfrac{12 \kappa_1}{\kappa_2}\sup_{y\in \mathrm{Supp}(K)}\mu_{T}(x-y)|K|^2_{L^2} (N\eta^d)^{-1}N^{-\kappa_2 \varpi/6\sup_{y\in \mathrm{Supp}(K)}\mu_{T}(x-y)}\\
&\hspace{2mm} + \tfrac{48\sqrt{3}\kappa_1}{\kappa_2} |K|_\infty |K|_{L^2} \varpi^{1/2}(N\eta^d)^{-3/2}(\log N)^{1/2}\exp\big(-\tfrac{\varpi^{1/2}\kappa_2|K|_{L^2}}{2\sqrt{3}|K|_\infty}(N\eta^d)^{1/2}(\log N)^{1/2}\big),
\end{align*}
where we used $\int_{\nu}^\infty \exp(-z^{1/2})dz \leq 4\nu^{1/2}\exp(-\nu^{1/2})$ for $\nu\geq 16$.  If \[\varpi \geq 12\kappa_2^{-1}\max(\sup_{x\in\mathcal{K},\,y \in \mathrm{Supp}(K)}\mu_{T}(x-y), 4|K|_{L^2}^{-2}|K|_\infty^2\kappa_2^{-1})\] and $\eta^d \geq N^{-1}\log N \tfrac{3 \log 2 |K|_\infty^2}{2 |K|_{L^2}^2}$, we have  
$$\E\big[ \big\{3\big(\widehat \mu^{N,h,\eta}_{T}(x)-K_\eta \star \overline \mu^h_{T}(x)\big)^2-\mathsf{V}_{\eta}^N\big\}_+\big] \leq  \kappa_8  (\log N)^{1/2}N^{-2},$$
with $ \kappa_8  = 2\max(\tfrac{12 \kappa_1}{\kappa_2}\sup_{x\in\mathcal{K},\,y\in \mathrm{Supp}(K)}\mu_{T}(x-y)|K|^2_{L^2} ,  \tfrac{48\sqrt{3}\kappa_1}{\kappa_2} |K|_\infty |K|_{L^2} \varpi^{1/2})$.

In the same way, we have the rough estimate
\begin{align*}
\E\big[ \max_{\eta' \leq \eta}\big\{3\big(\widehat \mu^{N,h,\eta}_{T}(x)-K_\eta \star \overline \mu^h_{T}(x)\big)^2-\mathsf{V}_{\eta}^N\big\}_+\big] & \leq \sum_{\eta' \in \mathcal H}\E\big[\big\{3\big(\widehat \mu^{N,h,\eta'}_{T}(x)-K_\eta \star \overline \mu^h_{T}(x)\big)^2-\mathsf{V}_{\eta'}^N\big\}_+\big] \\
&\leq  \mathrm{Card}(\mathcal H) \kappa_8 (\log N)^{1/2} N^{-2} \\
&\leq  \kappa_8 (\log N)^{1/2} N^{-1},
\end{align*}
using the previous bound. We thus have proved
\begin{align*}
\E\big[\mathsf{A}^N_{\eta}\big] & \leq 2 \kappa_8 (\log N)^{1/2}N^{-1}+24\big(\kappa_{10}^2|K|_{L^1}^2h^2+\mathcal B_\eta(x,\mu_{T})^2\big) \\
& \leq  \kappa_9 \big(\mathsf{V}_\eta^N+\mathcal B_\eta(x,\mu_{T})^2+h^2\big),
\end{align*}
with $ \kappa_9  = \max\big(2 \kappa_8 / |K|_{L^2}^2\varpi, 24\kappa_{10}^2|K|_{L^1}^2, 24\big)$. 
Back to the first step of the proof, we infer
\begin{align*}
 \mathbb E\Big[\big(\widehat \mu^{N,h, \widehat \eta^N(T,x) }_{T}(x)-\mu_{T}(x)\big)^2\Big] 
& \leq  \kappa_7 \big(\E\big[\mathsf{A}^N_{\eta}\big]+\mathsf V_{\eta}^N+\mathcal B_\eta(x,\mu_{T})^2+h^2\big) \\
& \leq \kappa_{5}\big(\mathsf V_{\eta}^N+\mathcal B_\eta(x,\mu_{T})^2+h^2\big),
\end{align*}
where now $\kappa_5 =  \kappa_7 (1+ \kappa_9 )$. Since $\eta \in \mathcal H$ is arbitrary, the proof of Theorem \ref{thm: oracle} is complete.
\subsubsection*{Proof of Corollary \ref{cor: oracle adaptive}}
For every $\eta \in \mathcal H$, we have
$$
\mathcal B_\eta(\mu_{T}, x)^2 \leq |\mu_{T}|_{k,K}^2 \eta^{2\min(k,\ell+1)}, 
$$
where $|\mu_{T}|_{k,K}$ is defined via \eqref{eq: def semi norm}, see \eqref{eq: bias multivariate} in the proof of Corollary \ref{cor: error}.  
By Theorem \ref{thm: oracle}, we thus obtain
\begin{align*}
 \mathbb E\Big[\big(\widehat \mu_{T}^{N,h, \widehat \eta^N(T,x) }(x)-\mu_{T}(x)\big)^2\Big] & \leq \kappa_{5}\max(|\mu_{T}|_{k,K}^2, \varpi |K|_{L^2}^2, 1)\\
& \hspace{2mm}\times \big(\min_{\eta \in \mathcal H}\big((N\eta^d)^{-1}\log N+ \eta^{2\min(k,\ell+1)}\big)+h^2\big) \\
& \leq  \kappa_6 \Big(\Big(\frac{\log N}{N}\Big)^{\frac{2\min(k,\ell+1)}{2\min(k,\ell+1)+d}}+h^2\Big)
\end{align*}
with $\kappa_6 =  2\kappa_{5}\max(|\mu_{T}|_{k,K}^2, \varpi |K|_{L^2}^2, 1)$, using the fact that the minimum in $\eta \in \mathcal H$ is attained for $\eta = (N/\log N)^{1/(2\min(k,\ell+1)+d)}$. 

\subsection{Proof of Theorem \ref{thm: nonlinear}}

We briefly outline the changes that are necessary in the previous proofs to extend the results to a nonlinear drift in the measure argument that satisfies Assumption \ref{ass: AIII 2}.

\subsubsection*{Theorem \ref{thm: deviation} under Assumption \ref{ass: AIII 2}} 
Only Lemma \ref{expp} needs to be extended to the case of a nonlinear drift in order to obtain Proposition \ref{prop: bernstein}, the rest of the proof remains unchanged. This amounts to prove that the random variables $b(\underline{s}, Y_{\underline{s}}^{n,h},\widetilde \mu^{N,h}_{\underline{s}})-b(\underline{s}, Y_{\underline{s}}^{n,h}, \bar \mu_{\underline{s}}^h) $ are sub-Gaussian, with the correct order in $N$. We may then repeat part of the proof of Proposition 19  of \cite{della2021nonparametric}. The smoothness assumption \eqref{eq: smoothness nonlinear b} for the nonlinear drift enables one to obtain
\begin{align*}
& \big|b(\underline{s}, Y_{\underline{s}}^{n,h},\widetilde \mu^{N,h}_{\underline{s}})-b(\underline{s}, Y_{\underline{s}}^{n,h}, \bar \mu_{\underline{s}}^h)\big|^2 \\
&\lesssim \sum_{l = 1}^{k-1}\big|\int_{(\R^d)^l}\delta_\mu^l b(s,Y_{\underline{s}}^{n,h}, y^l, \bar \mu_{\underline{s}}^h)(\widetilde \mu^{N,h}_{\underline{s}}-\bar \mu_{\underline{s}}^h)^{\otimes l}(dy^l)\big|^2+\min\big(\mathcal W_1(\widetilde \mu^{N,h}_{\underline{s}},\bar \mu_{\underline{s}})^2, \mathcal W_1(\widetilde \mu^{N,h}_{\underline{s}},\bar \mu_{\underline{s}})^{2k}\big),
\end{align*}
following the proof of Lemma 21 in \cite{della2021nonparametric}.
Next, $\min\big(\mathcal W_1(\widetilde \mu^{N,h}_{\underline{s}},\bar \mu_{\underline{s}})^2, \mathcal W_1(\widetilde \mu^{N,h}_{\underline{s}},\bar \mu_{\underline{s}})^{2k}\big)$ is sub-Gaussian with the right order in $N$, thanks to the sharp deviation estimates of Theorem 2 in Fournier and Guillin \cite{fournier2015rate}. As for the main part of the previous expansion, we use a bound of the form
$$\E\Big[\big|\int_{(\R^d)^l}\delta_\mu^l b(s,Y_{\underline{s}}^{n,h}, y^l, \bar \mu_{\underline{s}}^h)(\widetilde \mu^{N,h}_{\underline{s}}-\bar \mu_{\underline{s}}^h)^{\otimes l}(dy^l)\big|^{2p}\Big] \leq c^p N^{-p} p!\;\;\text{for every}\;\;p\geq 1,$$
for some constant $c$ depending on $b$, see Lemma 22 in \cite{della2021nonparametric}, showing eventually that this term is sub-Gaussian with the right order in $N$. Lemma \ref{expp} follows.

\subsubsection*{Theorem \ref{thm: error 1} under Assumption \ref{ass: AIII 2}} Again, only an extension of Proposition \ref{densityeuler} is needed, the rest of the proof remains unchanged. This only amounts to show that $(t,x) \mapsto b(t,x,\mu_t)$ is $\mathcal C_b^{1,3}$. The smoothness in $x$ is straightforward, and only the proof that $t \mapsto b(t,x,\mu_t)$ is $\mathcal C^1_b$ is required. By It\^o's formula, we formally have
$$\tfrac{d}{dt} b(t,\cdot,\mu_t) = \partial_1 b(t,\cdot,\mu_t)+\E\big[\mathcal A_t \delta_\mu b(t,\cdot,X_t,\mu_t)\big],$$
where $(X_t)_{t \in [0,T]}$ is a solution to \eqref{mckeaneq}. Assumption \ref{ass: AIII 2} enables one to conclude that $\tfrac{d}{dt} b(t,\cdot,\mu_t)$ is continuous and bounded.

\subsubsection*{Remaining proofs}
The proofs of Corollary \ref{cor: error}, Theorem \ref{thm: oracle} and Corollary \ref{cor: oracle adaptive} remain unchanged under Assumption \ref{ass: AIII 2}.

\bibliographystyle{alpha}
\bibliography{version_finale_arxiv}

\end{document}